\documentclass{article}
\usepackage{amsmath}
\usepackage{latexsym}
\usepackage{mathrsfs}
\usepackage{amsthm}
\usepackage{amsfonts}
\usepackage{setspace}
\usepackage{amssymb}
\usepackage{enumerate}
\usepackage{verbatim}
\usepackage{cite}
\usepackage[toc,page]{appendix}
\usepackage{geometry}
\usepackage[curve,arrow]{xy}
\parskip 0.1in
\newtheorem{theorem}{Theorem}[section]
\newtheorem{proposition}[theorem]{Proposition}

\newtheorem{lemma}[theorem]{Lemma}
\newtheorem{definition}[theorem]{Definition}

\newtheorem{example}[theorem]{Example}

\newtheorem{remark}[theorem]{Remark}

\DeclareMathOperator{\R}{\mathbb{R}}

\begin{document}

\title{\bf Notions of Regularity for Functions of a Split-Quaternionic Variable}

\author{John A. Emanuello\\
Florida State University\\	
\texttt{jemanuel@math.fsu.edu}\\ 
\and
 Craig A. Nolder\\
Florida State University\\	
\texttt{nolder@math.fsu.edu}\\
}


\maketitle

\begin{abstract}
	Notions of a ``holomorphic" function theory for functions of a split-quaternionic variable have been of recent interest. We describe two found in the literature and show that one notion encompasses a small class of functions, while the other gives a richer collection. In the second instance, we describe a simple subclass of functions and give two examples of an analogue of the Cauchy-Kowalewski extension in this context.
\end{abstract}

\section{Introduction}
One need look no further than a text on complex analysis, such as \cite{Ahl} or especially \cite{JonesSingerman}, to know that algebraic properties of $\mathbb{C}$ play a major role in the analysis and geometry of the plane. The simple fact that $i^2=-1$ gives rise to the Cauchy-Riemann equations, which is the foundation of the theory of holomorphic functions, which are those functions of a complex variable which are differentiable in a complex sense. Indeed,  the existence of the limit of the difference quotient
$$\lim_{\Delta z \to 0}\frac{f(z+\Delta z)-f(z)}{\Delta z}$$
means that the limit is the same whether $\Delta z=\Delta x$ or $\Delta z=i\Delta y$. That is,
$$\frac{\partial u}{\partial x}+i\frac{\partial v}{\partial x}=\frac{1}{i}\frac{\partial u}{\partial y}+\frac{\partial v }{\partial y},$$
and the C-R equations are obtained:
$$\frac{\partial u}{\partial x}=\frac{\partial v}{\partial y} \text{ and } \frac{\partial v}{\partial x}=-\frac{\partial u}{\partial y}.$$
The minus sign in the second equation occurs because $\frac{1}{i}=-i$, which is a direct consequence of $i^2=-1$. Thus, when a function of a complex variable with $C^1$ components is holomorphic if and only if the C-R equations are satisfied.

One may also consider functions of a complex variable which are annihilated by the operator
$$\partial_{\bar{z}}:=\frac{1}{2}\left(\frac{\partial}{\partial x}+i\frac{\partial}{\partial y} \right). $$
Indeed, a $C^1$ function is holomorphic if and only if it is annihilated by $\partial_{\bar{z}}$ and its complex derivative is given by $\partial_{z}f$, where 
$$\partial_{z}:=\frac{1}{2}\left(\frac{\partial}{\partial x}-i\frac{\partial}{\partial y} \right).$$

Unlike in the complex case, when we consider functions of a split-quaternionic variable and explore the two analogous ways of defining a holomorphic function, we find that they are not equivalent. Thus, two different theories of holomorphic functions can be studied, as in \cite{Libine1,MasEtAl}. However, the one in \cite{Libine1} stands out as the more natural analogue because it gives rise to a (relatively) large class of functions to be studied. Indeed, for the analogue defined in \cite{MasEtAl} we show (by adopting a proof of an analogous statement in Sudbery's paper \cite{Sudbery}) that only affine functions, which is a (relatively) small class of functions, satisfy the given conditions.
\subsection{The Split-Quaternions}

The split-quaternions are the real Clifford algebra 
$$C\ell_{1,1}:= \left\lbrace Z=x_0+x_1i+x_2j +x_3ij\ : \ x_0,x_1, x_2, x_3 \in \mathbb{R} \right\rbrace. $$ 

Functions of a split-quaternionic variable and notions of regularity have been the subject of interest in the literature \cite{Libine1,MasEtAl}. It is worth noting that the split-quaternions contain both the complex and split-complex numbers as subalgebras.

In a manner similar to the split-complex case \cite{EmanNold,Kisil97c}, we may obtain the indefinite quadratic form $Q_{2,2}$ by
$$Z\overline{Z}=x^2_0+x^2_1-x^2_2-x^2_3.$$
Hence we shall identify the split-quaternions with $\mathbb{R}^{2,2}$.

There are a number of ways to express the split-quaternions as $2 \times 2$ matrices over $\mathbb{R}$ and $\mathbb{C}$.

\begin{lemma}\label{matrix}
	As algebras, the split-quaternions and real $2\times 2$ matrices are isomorphic.
\end{lemma}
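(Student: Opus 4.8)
The plan is to exhibit an explicit algebra isomorphism $\varphi\colon C\ell_{1,1}\to M_2(\mathbb{R})$ by sending the generators $i$ and $j$ to carefully chosen $2\times 2$ real matrices and extending $\mathbb{R}$-linearly. The key constraint is that the images must satisfy the same defining relations as the split-quaternionic units. Writing out the multiplication table, I expect $i^2=-1$, $j^2=+1$, and $ij=-ji$ (so that $(ij)^2=ij\,ij=-i^2j^2=+1$); these are exactly the signature-$(1,1)$ Clifford relations. So the task reduces to finding real matrices $I$ and $J$ with $I^2=-\mathbb{1}$, $J^2=+\mathbb{1}$, and $IJ=-JI$.

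Concretely, I would set $\varphi(i)=\left(\begin{smallmatrix}0&-1\\1&0\end{smallmatrix}\right)$ and $\varphi(j)=\left(\begin{smallmatrix}1&0\\0&-1\end{smallmatrix}\right)$, forcing $\varphi(ij)=\varphi(i)\varphi(j)=\left(\begin{smallmatrix}0&1\\1&0\end{smallmatrix}\right)$, and of course $\varphi(1)=\mathbb{1}$. The first step is to verify by direct computation that these four matrices satisfy the required relations, which then guarantees that $\varphi$, defined on a general element $Z=x_0+x_1i+x_2j+x_3ij$ by linearity, is a genuine algebra homomorphism: it is $\mathbb{R}$-linear by construction, and multiplicativity on products of basis elements follows from matching the multiplication tables, then extends to all products by bilinearity.

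The second step is to check that $\varphi$ is a bijection. Since both $C\ell_{1,1}$ and $M_2(\mathbb{R})$ are $4$-dimensional real vector spaces, it suffices to show $\varphi$ is injective, or equivalently that the four image matrices $\mathbb{1},\varphi(i),\varphi(j),\varphi(ij)$ are linearly independent over $\mathbb{R}$; a general real $2\times 2$ matrix $\left(\begin{smallmatrix}a&b\\c&d\end{smallmatrix}\right)$ is uniquely recovered as $\tfrac{a+d}{2}\mathbb{1}+\tfrac{c-b}{2}\varphi(i)+\tfrac{a-d}{2}\varphi(j)+\tfrac{b+c}{2}\varphi(ij)$, so the map is onto, and by equal dimension it is an isomorphism.

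There is no serious obstacle here; the only real content is choosing matrices with the correct signs so that $j$ (which squares to $+1$) is not sent to something squaring to $-1$. The mild subtlety worth flagging is that the ``obvious'' quaternionic choice would use a matrix squaring to $-\mathbb{1}$ for every imaginary unit, which fails because $C\ell_{1,1}$ is split rather than division; the diagonal involution $\varphi(j)$ is precisely what encodes the indefinite signature and makes the target $M_2(\mathbb{R})$ rather than the Hamilton quaternions. I would close by remarking that this isomorphism is compatible with the norm: $\det\varphi(Z)=x_0^2+x_1^2-x_2^2-x_3^2=Z\overline{Z}$, so the quadratic form $Q_{2,2}$ corresponds to the determinant, which will be useful later.
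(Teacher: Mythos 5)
Your proof is correct and takes essentially the same approach as the paper: exhibit explicit matrix images of $1,i,j,ij$, verify the algebra structure, and recover a general $2\times 2$ matrix explicitly to get bijectivity. The only difference is cosmetic --- you send $j\mapsto\left(\begin{smallmatrix}1&0\\0&-1\end{smallmatrix}\right)$ where the paper uses $\left(\begin{smallmatrix}0&1\\1&0\end{smallmatrix}\right)$; the two representations are conjugate (by a rotation through $\pi/4$), and both identify $Q_{2,2}$ with the determinant.
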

\begin{proof}
	If we identify
	$$1\sim  \left[ \begin {array}{cc} 1&0\\ \noalign{\medskip}0&1\end {array}
	\right], \ i\sim  \left[ \begin {array}{cc} 0&-1\\ \noalign{\medskip}1&0\end {array}
	\right], \ j\sim \left[ \begin {array}{cc} 0&1\\ \noalign{\medskip}1&0\end {array}
	\right],$$
	then we may map $C\ell_{1,1}$  to the real $2\times 2$ matrices by
	$$x_0+x_1i+x_2j +x_3ij \longmapsto  \left[ \begin {array}{cc} x_{{0}}+x_{{3}}&-x_{{1}}+x_{{2}}
	\\ \noalign{\medskip}x_{{1}}+x_{{2}}&x_{{0}}-x_{{3}}\end {array}
	\right]. $$ 
	Notice that $$\det\left[ \begin {array}{cc} x_{{0}}+x_{{3}}&-x_{{1}}+x_{{2}}
	\\ \noalign{\medskip}x_{{1}}+x_{{2}}&x_{{0}}-x_{{3}}\end {array}
	\right]=x^{2}_{0} +x^{2}_{1}-x^{2}_{2}-x^{2}_{3},$$
	which is the form $Q_{2,2}$. It's easy to check that this gives an algebra homomorphism. 
	Further,
	$$\left[ \begin {array}{cc} y_1 & y_2
	\\ \noalign{\medskip}y_3 & y_4\end {array}
	\right] \longmapsto \frac{1}{2}\left[\left( y_1+y_4\right)+\left(y_3-y_2 \right)i+\left(y_3+y_2 \right)j+\left(y_1-y_4 \right)ij  \right] $$
	gives a two-sided inverse, so that the above is an algebra isomorphism.
\end{proof}

\section{Notions of Holomorphic}
The functions we are concerned with are 
$$f: U\subseteq \mathbb{R}^{2,2} \to C\ell_{1,1},$$
where $U$ is open (in the euclidean sense). As higher dimensional analogues of functions of a complex variable, we are interested in obtaining an analogous definition for \emph{holomorphic}. As we shall see, there are various ways of doing this in the literature.

The first and most interesting way is through split quaternionic valued differential operators \cite{Libine1}. The second is more recent and less interesting and is obtained by considering a difference quotient \cite{MasEtAl}. 

\subsection{Analogues of the Cauchy-Riemann Operator}
Recall that in complex analysis, one considers the Dirac operators $\partial_{\overline{z}} \text{ and } \partial_{z},$
whose product (in either order) gives the Laplacian for $\mathbb{R}^2$, usually denoted by $\Delta$. Of course, $f$ is called holomorphic if $\partial_{\overline{z}}f=0$ and its (complex) derivative is given by $\partial_{z} f$. Additionally, the real and imaginary parts of $f$ are harmonic functions, and the Dirichlet problem is well-posed.

The question asked in the literature is: \emph{Can we define operators valued in $C\ell_{1,1}$ which resemble $\partial_{\overline{z}}$ and $\partial_{z}$?} This question has been answered in the affirmative, although with little mention of the differential geometry which lies just below the surface. 

However the question we are really asking is: can we factorize the Laplacian in $\mathbb{R}^{2,2}$ with linear first order operators over  $C\ell_{1,1}$? In this semi-Riemannian manifold, the Laplacian, which is understood to be the derivative of the gradient, is given by \cite{Oneill}:
$$\Delta_{2,2}={\partial^2 \over \partial x^2_0} +  {\partial^2 \over \partial x^2_1} -  {\partial^2 \over \partial x^2_2} -  {\partial^2 \over \partial x^2_3}.$$
It is easy to check that the linear operators
\begin{align*}
&\overline{\partial} := {\partial \over \partial x_0}\ + \ i{\partial \over \partial x_1}\ - \ j{\partial \over \partial x_2}\ - \ ij{\partial \over \partial x_3} \text{ and }\\
&{\partial} := {\partial \over \partial x_0}\ - \ i{\partial \over \partial x_1}\ + \ j{\partial \over \partial x_2}\ + \ ij{\partial \over \partial x_3}
\end{align*}
are factors of $\Delta_{2,2}$. Due to the non-commutativity of $C\ell_{1,1}$, these operators may be applied to functions on either the left or right and with different results, in general.

\begin{remark}
	There are other factorizations of $\Delta_{2,2}$ inside $C\ell_{1,1}$. Our choice of $\overline{\partial}$ is deliberate-- it is the gradient inside the semi-Riemannian manifold $\mathbb{R}^{2,2}$. For alternative interpretation of this idea, see \cite{Obol}.
\end{remark}

\begin{definition}
	Let $U\subset C\ell_{1,1} \cong \mathbb{R}^{2,2}$ and let $F: U \to C\ell_{1,1}$ be $C^{1}\left(U\right)$. We say $F$ is left regular if
	$$\overline{\partial} F =0$$
	for every $Z \in U$. Similarly, we say $F$ is right regular if
	$$F \overline{\partial} =0$$
	for every $Z \in U$.
\end{definition}

We have adopted the above definition from \cite{Libine1}, which contains a proof of a Cauchy-like integral formula for left-regular functions.

By multiplying arbitrary $F$ with $\overline{\partial}$ we obtain the following conditions which make it easier to check left and right regularity.
\begin{proposition}
	Let $F: U \to C\ell_{1,1}$ be $C^{1}\left(U\right)$. Then $F$ is left regular if and only if it satisfies the system of PDEs:
	$$ \begin{cases} \displaystyle
	{\partial f_0 \over \partial x_0}- {\partial f_1 \over \partial x_1}-{\partial f_2 \over \partial x_2}-{\partial f_3 \over \partial x_3}=0 \\ \\
	\displaystyle {\partial f_1 \over \partial x_0}+ {\partial f_0 \over \partial x_1}+{\partial f_3 \over \partial x_2}-{\partial f_2 \over \partial x_3}=0 \\ \\
	\displaystyle
	{\partial f_2 \over \partial x_0}- {\partial f_3 \over \partial x_1}-{\partial f_0 \over \partial x_2}-{\partial f_1 \over \partial x_3}=0 \\ \\
	\displaystyle {\partial f_3 \over \partial x_0}+ {\partial f_2 \over \partial x_1}+{\partial f_1 \over \partial x_2}-{\partial f_0 \over \partial x_3}=0.
	\end{cases}$$
\end{proposition}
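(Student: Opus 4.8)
The plan is to compute $\overline{\partial} F$ directly in coordinates, expand it into its four real components relative to the basis $\{1, i, j, ij\}$, and then invoke linear independence to convert the single equation $\overline{\partial}F=0$ into the stated system.

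First I would write $F = f_0 + f_1 i + f_2 j + f_3 ij$ with each $f_k \in C^1(U)$ real-valued, and record the multiplication table of the basis $\{1, i, j, ij\}$. From the matrix isomorphism of Lemma \ref{matrix} (or equivalently from $i^2 = -1$, $j^2 = 1$, and $ij = -ji$) one reads off the products that will appear, notably $i\cdot i = -1$, $\ i\cdot j = ij$, $\ i\cdot ij = -j$, $\ j\cdot i = -ij$, $\ j\cdot j = 1$, $\ j\cdot ij = -i$, $\ ij\cdot i = j$, $\ ij\cdot j = i$, and $ij\cdot ij = 1$.

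Next I would apply $\overline{\partial} = \partial_{x_0} + i\,\partial_{x_1} - j\,\partial_{x_2} - ij\,\partial_{x_3}$ to $F$ on the left, being careful to place each scalar coefficient of the operator to the left of each basis element of $F$. Expanding the sixteen resulting terms and reducing each product of basis elements via the table, I would then collect the terms according to $1$, $i$, $j$, and $ij$. The coefficient of $1$ becomes $\frac{\partial f_0}{\partial x_0} - \frac{\partial f_1}{\partial x_1} - \frac{\partial f_2}{\partial x_2} - \frac{\partial f_3}{\partial x_3}$, and similarly the coefficients of $i$, $j$, and $ij$ reproduce the remaining three expressions in the system.

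Since $\{1, i, j, ij\}$ is an $\mathbb{R}$-basis of $C\ell_{1,1}$, the element $\overline{\partial}F$ vanishes if and only if each of its four real coefficients vanishes, which is exactly the stated system; this yields both directions of the equivalence simultaneously. The only genuine obstacle is bookkeeping: because $\overline{\partial}$ acts on the left and $C\ell_{1,1}$ is non-commutative, the sign conventions coming from $i^2=-1$ against $j^2=(ij)^2=1$, together with the anticommutativity $ij=-ji$, must be tracked with care, and it is here—not in any conceptual difficulty—that an error is most likely to creep in.
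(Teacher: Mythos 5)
Your proposal is correct and is essentially the paper's own proof: the paper likewise expands $\overline{\partial}F$ term by term, collects the coefficients of $1,i,j,ij$, and equates them to zero, relying on the linear independence of the basis. Your multiplication table and the resulting four component expressions check out exactly, so the only difference is that you spell out the bookkeeping the paper leaves implicit.
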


\begin{proof}
	The proof follows directly from the definition. Simply multiply in the proper order, collect like components together, and equate them to zero to obtain the desired system.
\end{proof}

\begin{proposition}
	Let $F: U \to C\ell_{1,1}$ be $C^{1}\left(U\right)$. Then $F$ is right regular if and only if it satisfies the system of PDEs:
	$$ \begin{cases} \displaystyle
	{\partial f_0 \over \partial x_0}- {\partial f_1 \over \partial x_1}-{\partial f_2 \over \partial x_2}-{\partial f_3 \over \partial x_3}=0 \\ \\
	\displaystyle {\partial f_1 \over \partial x_0}+ {\partial f_0 \over \partial x_1}-{\partial f_3 \over \partial x_2}+{\partial f_2 \over \partial x_3}=0 \\ \\
	\displaystyle
	{\partial f_2 \over \partial x_0}+ {\partial f_3 \over \partial x_1}-{\partial f_0 \over \partial x_2}+{\partial f_1 \over \partial x_3}=0 \\ \\
	\displaystyle {\partial f_3 \over \partial x_0}- {\partial f_2 \over \partial x_1}-{\partial f_1 \over \partial x_2}-{\partial f_0 \over \partial x_3}=0.
	\end{cases}$$
\end{proposition}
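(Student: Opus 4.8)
The plan is to mirror the proof of the preceding proposition, the only change being that $\overline{\partial}$ now acts on $F$ from the right, so that the noncommutativity of $C\ell_{1,1}$ enters through right rather than left multiplication by the units. Writing $F = f_0 + f_1 i + f_2 j + f_3 ij$ and recalling from the matrix model of Lemma~\ref{matrix} that $i^2 = -1$, $j^2 = (ij)^2 = 1$, and $ji = -ij$, I would first assemble
$$F\overline{\partial} = \frac{\partial F}{\partial x_0} + \frac{\partial F}{\partial x_1}\,i - \frac{\partial F}{\partial x_2}\,j - \frac{\partial F}{\partial x_3}\,ij,$$
in which each coefficient of $F$ is differentiated and the corresponding unit is attached on the right.

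The computational core is to expand the four products appearing above by distributing the right multiplications by $1$, $i$, $-j$, and $-ij$ across $F$. This requires the right-product table of the units, namely $ji = -ij$, $(ij)i = j$, $(ij)j = i$, $i(ij) = -j$, and $j(ij) = -i$, together with the squares recorded above; each entry follows in one line from associativity and $ji = -ij$. Collecting the resulting sixteen monomials according to the basis $\{1, i, j, ij\}$ and equating the coefficient of each basis element to zero produces the four displayed PDEs: the scalar part gives the first equation (which is identical to the left-regular case), and the $i$-, $j$-, and $ij$-parts give the remaining three. Conversely, if the system holds then every component of $F\overline{\partial}$ vanishes, so $F$ is right regular; this establishes the equivalence.

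I anticipate no essential difficulty, only careful bookkeeping of signs. The sole point that demands attention is that right multiplication by a unit can differ in sign from left multiplication---compare $(ij)i = j$ with $i(ij) = -j$---and these discrepancies are exactly what distinguishes the present system from the left-regular one. Indeed, one sees that the scalar equation coincides in both propositions, while the other three differ only in the placement of minus signs on the cross terms. Tracking these signs term by term through the expansion is all that the verification requires.
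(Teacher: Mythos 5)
Your proposal is correct and follows exactly the route the paper takes (and takes for granted): expand $F\overline{\partial}$ by right-multiplying $F$ by the units, collect the coefficients of $1, i, j, ij$, and set them to zero; your right-multiplication table and the resulting sign pattern check out, reproducing the displayed system precisely. This is the same direct computation the paper invokes for the left-regular case, so there is nothing further to reconcile.
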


However, this notion of regularity is also some what unsatisfying, for simple analogues of holomorphic functions in the complex plane are not regular. 

\begin{example}
	Let $A=a+ib+jc+ijd \in C\ell_{1,1}$. Then
	\begin{align*}
	AZ=&(ax_0-bx_1+cx_2+dx_3)+i(bx_0+ax_1+dx_2-cx_3)\\
	+j&(cx_0+dx_1+ax_2-bx_3)+ij(dx_0-cx_1+bx_2+ax_3).
	\end{align*}
	
	Thus,
	\begin{align*}
	\overline{\partial} (AZ)&= (a+ib+jc+dij)+i(-b+ai+dj-cij)\\
	&-j(c+id+aj+bij)-ij(d-ic-bj+aij)\\
	&=-2a+i2b+j2c+ij2d\\
	&=-2\overline{A},
	\end{align*}	
	A similar calculation shows that
	$$ \left( AZ\right)\overline{\partial}=-2A.$$
	
	Other calculations show that the function $ZA$ is also neither left-regular nor right-regular.
\end{example}

We obtain similar systems of PDEs if we consider the equations ${\partial}F=0$ and $F{\partial}=0$:
$$ \begin{cases} \displaystyle
{\partial f_0 \over \partial x_0}+ {\partial f_1 \over \partial x_1}+{\partial f_2 \over \partial x_2}+{\partial f_3 \over \partial x_3}=0 \\ \\
\displaystyle {\partial f_1 \over \partial x_0}- {\partial f_0 \over \partial x_1}-{\partial f_3 \over \partial x_2}+{\partial f_2 \over \partial x_3}=0 \\ \\
\displaystyle
{\partial f_2 \over \partial x_0}+ {\partial f_3 \over \partial x_1}-{\partial f_0 \over \partial x_2}+{\partial f_1 \over \partial x_3}=0 \\ \\
\displaystyle {\partial f_3 \over \partial x_0}- {\partial f_2 \over \partial x_1}-{\partial f_1 \over \partial x_2}+{\partial f_0 \over \partial x_3}=0
\end{cases}$$
and 
$$ \begin{cases} \displaystyle
{\partial f_0 \over \partial x_0}+ {\partial f_1 \over \partial x_1}+{\partial f_2 \over \partial x_2}+{\partial f_3 \over \partial x_3}=0 \\ \\
\displaystyle {\partial f_1 \over \partial x_0}- {\partial f_0 \over \partial x_1}+{\partial f_3 \over \partial x_2}-{\partial f_2 \over \partial x_3}=0 \\ \\
\displaystyle
{\partial f_2 \over \partial x_0}- {\partial f_3 \over \partial x_1}+{\partial f_0 \over \partial x_2}-{\partial f_1 \over \partial x_3}=0 \\ \\
\displaystyle {\partial f_3 \over \partial x_0}+ {\partial f_2 \over \partial x_1}+{\partial f_1 \over \partial x_2}+{\partial f_0 \over \partial x_3}=0.
\end{cases}$$

These also produce unsatisfying analogues of holomorphic since linear functions, again, fail these conditions. 

\begin{example}
	Let $A=a+ib+jc+ijd \in C\ell_{1,1}$. Then,
	\begin{align*}
	{\partial}(AZ)&= (a+ib+jc+dij)-i(-b+ai+dj-cij)\\
	&+j(c+id+aj+bij)+ij(d-ic-bj+aij)\\
	&=4a.
	\end{align*}	
	A similar calculation shows that
	$$ \left( AZ\right){\partial}=4A.$$
	Other calculations show that the function $ZA$ is not annihilated by ${\partial}$ on either side.
\end{example}

\subsection{Difference Quotients}
Recall, another (and probably primary) way to define holomorphic functions is via the limit of a difference quotient:
$$ \lim_{\Delta z \to 0}{f\left( z+\Delta z\right)-f\left( z\right) \over  \Delta z}.$$
One obtains the Cauchy-Riemann equations by allowing $\Delta z$ to approach $0$ along the real axis and again along the imaginary axis and then setting the results equal to each other.

In Masouri et. al., a similar method is used to produce another analogue of holomorphic \cite{MasEtAl}. However, since the split-quaternions are not commutative, so there are two ways to construct an analogue of the difference quotient. In Masouri the ``quotient" is defined by
$$\lim_{\Delta Z \to 0}\left( f\left( Z+\Delta Z\right)-f\left( Z\right)\right)   \left( \Delta Z\right)^{-1}.$$
When this limit exists, such functions are called right $C\ell_{1,1}$-differentiable. By setting $\Delta Z$ equal to $\Delta x_0$, $ i\Delta x_1$, $ j\Delta x_2$, and $ ij\Delta x_3$, taking the limit in each instance, we get four ways to take the ``derivative'' \cite{MasEtAl}. That is,

$$\lim_{\Delta x_0 \to 0}\left( f\left( \zeta+\Delta x_0\right)-f\left( \zeta\right)\right)   \left( \Delta x_0\right)^{-1}=\frac{\partial f_0}{\partial x_0}+i\frac{\partial f_1}{\partial x_0}+j\frac{\partial f_2}{\partial x_0}+ij\frac{\partial f_3}{\partial x_0},$$

$$\lim_{i\Delta x_1 \to 0}\left( f\left( \zeta+\Delta x_0\right)-f\left( \zeta\right)\right)   \left( i\Delta x_1\right)^{-1}=-i\frac{\partial f_0}{\partial x_1}+\frac{\partial f_1}{\partial x_1}+ij\frac{\partial f_2}{\partial x_1}-j\frac{\partial f_3}{\partial x_1},$$

$$\lim_{j\Delta x_2 \to 0}\left( f\left( \zeta+\Delta x_0\right)-f\left( \zeta\right)\right)   \left( j\Delta x_2\right)^{-1}=j\frac{\partial f_0}{\partial x_2}+ij\frac{\partial f_1}{\partial x_2}+\frac{\partial f_2}{\partial x_2}+i\frac{\partial f_3}{\partial x_2},$$
and
$$\lim_{ij\Delta x_3 \to 0}\left( f\left( \zeta+\Delta x_0\right)-f\left( \zeta\right)\right)   \left( ij\Delta x_3\right)^{-1}=ij\frac{\partial f_0}{\partial x_3}-j\frac{\partial f_1}{\partial x_3}-i\frac{\partial f_2}{\partial x_3}+\frac{\partial f_3}{\partial x_3}.$$

Equating the four results, we obtain the system of PDEs \cite{MasEtAl}:

$$ \begin{cases} \displaystyle
{\partial f_0 \over \partial x_0}= {\partial f_1 \over \partial x_1}={\partial f_2 \over \partial x_2}= {\partial f_3 \over \partial x_3} \\ \\
\displaystyle {\partial f_1 \over \partial x_0}=- {\partial f_0 \over \partial x_1}= {\partial f_3 \over \partial x_2}= -{\partial f_2 \over \partial x_3} \\ \\
\displaystyle
{\partial f_2 \over \partial x_0}= - {\partial f_3 \over \partial x_1}={\partial f_0 \over \partial x_2}=-{\partial f_1 \over \partial x_3} \\ \\
\displaystyle {\partial f_3 \over \partial x_0}= {\partial f_2 \over \partial x_1}={\partial f_1 \over \partial x_2}={\partial f_0 \over \partial x_3}.
\end{cases}$$

Although the work which introduces this notion of differentiability, \cite{MasEtAl}, does not mention any specific examples of functions of right $C\ell_{1,1}$-differentiable functions, an entire class of functions can be easily shown to have this property.
\begin{example}
	Recall that 
	\begin{align*}
	AZ+K=&(ax_0-bx_1+cx_2+dx_3+k)+i(bx_0+ax_1+dx_2-cx_3+\ell)\\
	+j&(cx_0+dx_1+ax_2-bx_3+m)+ij(dx_0-cx_1+bx_2+ax_3+n).
	\end{align*}
	Notice $f(Z)=AZ+K$ is right $C\ell_{1,1}$-differentiable. Indeed, the ``derivative'' is
	
	$$\lim_{\Delta Z \to 0}\left( A \left(  Z+\Delta Z \right)+K -AZ-K\right)   \left( \Delta Z\right)^{-1}=\lim_{\Delta Z \to 0}\left( A\Delta Z\right)\left( \Delta Z\right)^{-1}=A.$$
\end{example}

\begin{theorem}
	Let $F:U\subseteq \R^{2,2}\to C\ell_{1,1}$. Then $F$ is right $C\ell_{1,1}$-differentiable if and only $F(Z)=AZ+K$, where $A,K\in C\ell_{1,1}$. That is, the right $C\ell_{1,1}$-differentiable functions must be affine mappings.
\end{theorem}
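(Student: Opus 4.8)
The \textbf{if} direction is already witnessed by the Example immediately preceding the theorem, where $f(Z)=AZ+K$ is shown to be right $C\ell_{1,1}$-differentiable with derivative $A$. So the content is the \textbf{only if} direction, and my plan is to extract enough rigidity from the displayed system of PDEs to force $F$ to be affine. First I would name the data: right $C\ell_{1,1}$-differentiability says precisely that each of the four rows of the system consists of a single common value, so I set $a:=\partial f_0/\partial x_0$, $b:=\partial f_1/\partial x_0$, $c:=\partial f_2/\partial x_0$, $d:=\partial f_3/\partial x_0$ (the components of the derivative $A(Z)=a+ib+jc+ijd$). Reading the system off, every first-order partial $\partial f_k/\partial x_\mu$ equals one of $\pm a,\pm b,\pm c,\pm d$. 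Consequently, showing that $F$ is affine is equivalent to showing that all second-order partials of the $f_k$ vanish, which is in turn equivalent to showing that $a,b,c,d$ are locally constant. For this I would assume $F\in C^2(U)$ (or first note, as Sudbery does, that differentiability already yields the needed regularity), so that Clairaut's theorem on equality of mixed partials applies.

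The crux is that the indefinite signature of $\mathbb{R}^{2,2}$ makes the resulting relations \emph{overdetermined with conflicting signs}. For instance, equating the mixed partials of $f_1$ gives $\partial a/\partial x_0=\partial b/\partial x_1$; doing the same for $f_3$ gives $\partial d/\partial x_3=\partial a/\partial x_0$; and for $f_2$ it gives $\partial d/\partial x_3=-\,\partial b/\partial x_1$. Chaining these yields $\partial a/\partial x_0=\partial b/\partial x_1=-\,\partial b/\partial x_1$, whence $\partial a/\partial x_0=0$. The sign clash (the minus arising from the $j$- and $ij$-rows, where the hyperbolic directions flip the sign) is exactly the feature absent in the positive-definite quaternionic case, and it is what lets me kill a first derivative outright rather than merely proving it harmonic.

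I would then run the same bookkeeping over all four functions $f_0,\dots,f_3$ and all pairs of directions: each of the sixteen first partials of $a,b,c,d$ occurs in two mixed-partial identities of opposite sign and is therefore forced to vanish identically. Hence $a,b,c,d$ are constant on each component of $U$, so the derivative $A(Z)\equiv A$ is a fixed split-quaternion and $DF(Z)$ is the constant real-linear map $\Delta Z\mapsto A\,\Delta Z$. Integrating $D(F-AZ)=0$ then gives $F(Z)=AZ+K$ with $K\in C\ell_{1,1}$ constant, which completes the argument.

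The main obstacle I anticipate is not conceptual but organizational: verifying that the opposite-sign pairing genuinely covers all sixteen derivatives, since a single missed case would leave open a nonconstant (harmonic) mode and break the conclusion. A secondary technical point is justifying the $C^2$ regularity needed to invoke Clairaut; I would either build this into the hypotheses or, following \cite{Sudbery}, deduce it from the existence of the difference-quotient limit.
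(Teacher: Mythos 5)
Your proof is correct, but it takes a genuinely different route from the paper's. The paper follows Sudbery's complex decomposition: it writes $Z=z+wj$ with $z=x_0+x_1i$, $w=x_2+x_3i$, splits $F=g+hj$, reads the PDE system as saying $g$ is holomorphic in $(z,\overline{w})$ and $h$ in $(\overline{z},w)$ together with the coupling relations $\partial g/\partial z=\partial h/\partial w$ and $\partial g/\partial\overline{w}=\partial h/\partial\overline{z}$, kills the pure second derivatives by playing the coupling against holomorphy (e.g.\ $\partial^2 g/\partial z^2=\partial_w(\partial h/\partial z)=0$ because $\partial h/\partial z\equiv 0$), integrates to get bilinear forms $g=\alpha+\beta z+\gamma\overline{w}+\delta z\overline{w}$, $h=\epsilon+\eta\overline{z}+\theta w+\nu\overline{z}w$, and only then uses the coupling again to force $\delta=\nu=0$ and assemble $A=\beta+\gamma j$, $K=\alpha+\epsilon j$. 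You instead stay entirely in real variables and kill all sixteen partials of $a,b,c,d$ by sign clashes among Clairaut identities; your sample chain ($a_0=b_1$ from $f_1$, $a_0=d_3$ from $f_3$, $d_3=-b_1$ from $f_2$, writing $a_\mu$ for $\partial a/\partial x_\mu$) is correct, and your organizational worry is resolvable: the sixteen derivatives split into four families $\{a_0,b_1,c_2,d_3\}$, $\{a_1,b_0,c_3,d_2\}$, $\{a_2,b_3,c_0,d_1\}$, $\{a_3,b_2,c_1,d_0\}$, and each family carries a clash (for the second, $a_1=-b_0$ from $f_0$, $a_1=d_2$ from $f_2$, $b_0=d_2$ from $f_3$, forcing all four to vanish; the other two families are analogous). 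What each approach buys: yours is more elementary (no complex structure needed) and annihilates the mixed bilinear terms $\delta,\nu$ automatically, which the paper must remove in a separate coefficient-matching step; the paper's route gets the needed smoothness essentially for free, since separate holomorphy of $g$ and $h$ makes them $C^\infty$, whereas you must either add $C^2$ as a hypothesis (technically proving a weaker statement than the one asserted) or import the regularity bootstrap from Sudbery --- the one place where your write-up is a citation rather than an argument, though the paper itself glosses the same point with the bare assertion that ``$g$ and $h$ have continuous partial derivatives of all orders.'' Finally, like the paper (which assumes this W.L.O.G.), you need $U$ connected to obtain a single pair $(A,K)$ rather than one per component.
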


\begin{proof}
	As similar fact is true for functions of a quaternionic variable and so we follow a similar proof from Sudbery's paper\footnote{We are very grateful to Professor Uwe K\"{a}hler of University of Aveiro for bringing this paper to our attention.} \cite{Sudbery}.
	
	First notice that $Z=(x_0+x_1i)+(x_2+x_3i)j=z+wj$. As such we may write $f(Z)=g(z,w)+h(z,w)j$, where $g(z,w)=f_0(z,w)+if_1(z,w)$ and $h(z,w)=f_2(z,w)+if_3(z,w)$.
	
	Now, the above system of PDEs gives us that $g$ is holomorphic with respect to the complex variables $z$ and $\overline{w}$. Similarly, $h$ is holomorphic with respect to the complex variables $w$ and $\overline{z}$. Additionally,
	\begin{align*}
	&\frac{\partial g}{\partial z}= \frac{\partial f_0}{\partial x_0}+i\frac{\partial f_1}{\partial x_0} = \frac{\partial f_2}{\partial x_2}+i\frac{\partial f_3}{\partial x_2}=\frac{\partial h}{\partial w},\\
	&\frac{\partial g}{\partial \overline{w}}= -\frac{\partial f_1}{\partial x_3}+i\frac{\partial f_0}{\partial x_3}= -\frac{\partial f_3}{\partial x_1}+i\frac{\partial f_2}{\partial x_1}=\frac{\partial h}{\partial \overline{z}}.
	\end{align*}
	Now, $g$ and $h$ have continuous partial derivatives of all orders. Thus, we must have
	\begin{align*}
	&\frac{\partial^2 g}{\partial z^2}=\frac{\partial }{\partial z}\left( \frac{\partial h}{\partial w}\right) = \frac{\partial }{\partial w}\left( \frac{\partial h}{\partial z}\right)=0,\\
	&\frac{\partial^2 h}{\partial w^2}=\frac{\partial }{\partial w}\left( \frac{\partial g}{\partial z}\right) = \frac{\partial }{\partial z}\left( \frac{\partial g}{\partial w}\right)=0,\\
	&\frac{\partial^2 g}{\partial \overline{w}^2}=\frac{\partial }{\partial \overline{w}}\left( \frac{\partial h}{\partial \overline{z}}\right) = \frac{\partial }{\partial \overline{z}}\left( \frac{\partial h}{\partial \overline{w}}\right)=0,\\
	&\frac{\partial^2 h}{\partial \overline{z}^2}=\frac{\partial }{\partial \overline{z}}\left( \frac{\partial g}{\partial \overline{w}}\right) = \frac{\partial }{\partial \overline{w}}\left( \frac{\partial g}{\partial \overline{z}}\right)=0.
	\end{align*}
	W.L.O.G. we may assume that $U$ is connected  and convex (since each connected component may be covered by convex sets, which overlap pair-wise on convex sets). Thus integrating on line segments allows us to conclude that $g$ and $h$ are linear:
	\begin{align*}
	& g(z,w)= \alpha +\beta z+\gamma \overline{w}+\delta z\overline{w},\\
	& h(z,w)= \epsilon +\eta \overline{z}+\theta w+\nu \overline{z}w.
	\end{align*}
	Since $\frac{\partial g}{\partial z}=\frac{\partial h}{\partial w}$, we must have that $\beta=\theta$ and $\delta=\nu=0$. Also since $\frac{\partial g}{\partial \overline{w}}=\frac{\partial h}{\partial \overline{z}}$, it is the case that $\gamma=\eta$. Thus,
	\begin{align*}
	f(Z)&=g(z,w)+h(z,w)j\\
	&=(\alpha +\beta z+\gamma \overline{w})+(\epsilon+ \gamma \overline{z}+\beta w)j\\
	&=(\beta + \gamma j) (z+wj)+ (\alpha+\epsilon j)\\
	&=A Z +K,
	\end{align*}
	as required.
\end{proof}

\begin{remark}
	The above theorem proves that right $C\ell_{1,1}$-differentiable functions are not left or right regular and conversely (except for when $A=0$). Indeed, they are also not annihilated by ${\partial}$ on either side.
\end{remark}

As an alternative to the definition found in \cite{MasEtAl}, one may reverse the multiplication in the difference quotient to obtain 
$$\lim_{\Delta Z \to 0}\left( \Delta Z\right)^{-1} \left( f\left( Z+\Delta Z\right)-f\left( Z\right)\right).$$
When this limit exists, such functions are called left $C\ell_{1,1}$-differentiable. Proceeding as above, a slightly different system of PDEs than the one found in \cite{MasEtAl} is obtained:

$$ \begin{cases} \displaystyle
{\partial f_0 \over \partial x_0}= {\partial f_1 \over \partial x_1}={\partial f_2 \over \partial x_2}= {\partial f_3 \over \partial x_3} \\ \\
\displaystyle {\partial f_1 \over \partial x_0}=- {\partial f_0 \over \partial x_1}= -{\partial f_3 \over \partial x_2}= {\partial f_2 \over \partial x_3} \\ \\
\displaystyle
{\partial f_2 \over \partial x_0}= {\partial f_3 \over \partial x_1}={\partial f_0 \over \partial x_2}={\partial f_1 \over \partial x_3} \\ \\
\displaystyle {\partial f_3 \over \partial x_0}= -{\partial f_2 \over \partial x_1}=-{\partial f_1 \over \partial x_2}={\partial f_0 \over \partial x_3}.
\end{cases}$$

\begin{example}
	Recall that 
	\begin{align*}
	AZ=&(ax_0-bx_1+cx_2+dx_3)+i(bx_0+ax_1+dx_2-cx_3)\\
	+j&(cx_0+dx_1+ax_2-bx_3)+ij(dx_0-cx_1+bx_2+ax_3).
	\end{align*}
	Notice $f(Z)=AZ+K$ is not left $C\ell_{1,1}$-differentiable.
	
	However, the map $F(Z)=ZA+K$ is left $C\ell_{1,1}$-differentiable. Indeed, the ``derivative'' is
	
	$$\lim_{\Delta Z \to 0}\left( \Delta Z\right)^{-1}\left(  \left(  Z+\Delta Z \right)A +K -ZA-K\right) =\lim_{\Delta Z \to 0}\left( \Delta Z\right)^{-1}\left( \Delta Z A \right)=A.$$
\end{example}

\begin{theorem}
	Let $F:U\subseteq \R^{2,2}\to C\ell_{1,1}$. Then $F$ is left $C\ell_{1,1}$-differentiable if and only $F(Z)=ZA+K$, where $A,K\in C\ell_{1,1}$. That is, the left $C\ell_{1,1}$-differentiable functions must be affine mappings.
\end{theorem}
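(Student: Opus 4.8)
The plan is to mirror the proof of the preceding theorem (the right $C\ell_{1,1}$-differentiable case), making the sign and conjugation adjustments dictated by the left-quotient system of PDEs. First I would split the variable as $Z = (x_0+x_1 i)+(x_2+x_3 i)j = z+wj$ and write $F(Z)=g(z,w)+h(z,w)j$ with $g=f_0+if_1$ and $h=f_2+if_3$. The first task is to read off from the left system what holomorphy $g$ and $h$ enjoy, and here the computation diverges from the right case in an essential way: regrouping the scalar equations shows that the left conditions force both $g$ and $h$ to be holomorphic, in the ordinary complex sense, in \emph{each} of $z$ and $w$ — with no conjugate variables appearing — in contrast to the mixed $z,\overline{w}$ / $w,\overline{z}$ pattern of the right case. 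In particular $g$ and $h$ are holomorphic functions of two complex variables and hence automatically real-analytic, which supplies the smoothness needed to manipulate higher derivatives without appealing to any (unavailable) ellipticity of $\Delta_{2,2}$.

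Next I would extract the cross-relations produced by equating the four directional difference quotients. Writing the common left-derivative as $D=D_g+D_h j$, the four equalities $D=\partial_{x_0}F=-i\,\partial_{x_1}F=j\,\partial_{x_2}F=ij\,\partial_{x_3}F$, together with the identity $jc=\overline{c}\,j$ for $c\in\mathbb{C}$, give $\partial_z g=D_g$, $\partial_w g=\overline{D_h}$, $\partial_z h=D_h$, and $\partial_w h=\overline{D_g}$. Consequently $\partial_z g=\overline{\partial_w h}$ and $\partial_w g=\overline{\partial_z h}$. These are the analogues of the relations $\partial g/\partial z=\partial h/\partial w$ and $\partial g/\partial\overline{w}=\partial h/\partial\overline{z}$ used before, but now carrying a complex conjugation.

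The crux is to turn these conjugated cross-relations into the vanishing of all second derivatives, and this is the step I expect to be the main obstacle, since one must track which slot the conjugation acts on and invoke exactly the right holomorphy to kill each term. Using $\partial_z\overline{\phi}=\overline{\partial_{\overline{z}}\phi}$ together with equality of mixed partials, I would compute, for instance,
$$\frac{\partial^2 g}{\partial z^2}=\frac{\partial}{\partial z}\overline{\left(\frac{\partial h}{\partial w}\right)}=\overline{\frac{\partial}{\partial \overline{z}}\frac{\partial h}{\partial w}}=\overline{\frac{\partial}{\partial w}\frac{\partial h}{\partial \overline{z}}}=0,$$
the last equality using $\partial h/\partial\overline{z}=0$ (holomorphy of $h$ in $z$). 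The companion identities $\partial^2 g/\partial w^2=\partial^2 g/\partial z\,\partial w=0$ and their analogues for $h$ follow in the same fashion. Note that, unlike the right case, I cannot simply invoke $\partial_z h=0$; the vanishing must instead come from the conjugation identity paired with holomorphy in the opposite variable.

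Finally, assuming as before that $U$ is connected and convex and integrating along line segments, I would conclude $g(z,w)=\alpha+\beta z+\gamma w$ and $h(z,w)=\epsilon+\eta z+\theta w$, with no $zw$ term because the mixed second derivatives vanish and no conjugate terms because of holomorphy. The cross-relations $\partial_z g=\overline{\partial_w h}$ and $\partial_w g=\overline{\partial_z h}$ then force $\beta=\overline{\theta}$ and $\gamma=\overline{\eta}$. Recombining and again using $jc=\overline{c}\,j$ yields
$$F(Z)=(\alpha+\beta z+\gamma w)+(\epsilon+\overline{\gamma}z+\overline{\beta}w)j=(z+wj)(\beta+\overline{\gamma}j)+(\alpha+\epsilon j)=ZA+K,$$
with $A=\beta+\overline{\gamma}j$ and $K=\alpha+\epsilon j$, as required. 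The converse direction is exactly the content of the preceding example, which exhibits $ZA+K$ as left $C\ell_{1,1}$-differentiable with derivative $A$.
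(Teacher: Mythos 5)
Your proof is correct and follows essentially the same route as the paper's: split $Z=z+wj$, establish separate holomorphy of the two complex components from the left system of PDEs, derive cross-relations between their first derivatives, show all second derivatives vanish, and integrate over a convex domain to conclude $g$ and $h$ are affine before recombining into $ZA+K$. The only difference is bookkeeping: the paper writes $F=g+jh$ with $h=f_2-if_3$, so that $h$ is anti-holomorphic and the cross-relations $\partial g/\partial z=\partial h/\partial \overline{w}$, $\partial g/\partial w=\partial h/\partial \overline{z}$ carry no conjugations, whereas you write $F=g+hj$ with $h=f_2+if_3$, making both components holomorphic in $(z,w)$ at the cost of conjugated cross-relations $\partial_z g=\overline{\partial_w h}$, $\partial_w g=\overline{\partial_z h}$ --- the two formulations are equivalent under $h\mapsto\overline{h}$, and your handling of the conjugations via $\partial_z\overline{\phi}=\overline{\partial_{\overline{z}}\phi}$ is sound.
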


\begin{proof}
	We can make a few adjustments to the proof of the right $C\ell_{1,1}$-differentiable case.
	
	First note that if we write $F(Z)=g(z,w)+jh(z,w)$, where $g(z,w)=f_0(z,w)+if_1(z,w)$, $h(z,w)=f_2(z,w)-if_3(z,w)$, and the complex variables $z,w$ as above.
	
	The system of PDEs above assures that $g$ is holomorphic with respect to $z$ and $w$, while $h$ is holomorphic with respect to $\overline{z}$ and $\overline{w}$. Additionally, we get that
	\begin{align*}
	&\frac{\partial g}{\partial z}=\frac{\partial h}{\partial \overline{w}}\\
	&\frac{\partial g}{\partial w}=\frac{\partial h}{\partial \overline{z}}.
	\end{align*}
	
	We also have that $g$ and $h$ have partial derivatives of all orders and similarly to the ``right'' case the second partials vanish:
	$$\frac{\partial^2 g}{\partial z^2}=
	\frac{\partial^2 h}{\partial \overline{w}^2}
	=\frac{\partial^2 g}{\partial w^2}=\frac{\partial^2 h}{\partial \overline{z}^2}=0.$$
	
	Thus, by the same argument for the right $C\ell_{1,1}$-differentiable proof, we conclude that $g$ and $h$ are linear:
	\begin{align*}
	& g(z,w)= \alpha +\beta z+\gamma w+\delta zw,\\
	& h(z,w)= \epsilon +\eta \overline{z}+\theta \overline{w}+\nu \overline{z}\overline{w}.
	\end{align*}
	
	Since $\frac{\partial g}{\partial z}=\frac{\partial h}{\partial \overline{w}}$ and  $\frac{\partial g}{\partial w}=\frac{\partial h}{\partial \overline{z}}$, we must have that $\beta=\theta$, $\gamma=\eta$,  and $\delta=\nu=0$. Thus,
	\begin{align*}
	f(Z)&=g(z,w)+jh(z,w)\\
	&=(\alpha +\beta z+\gamma w )+j(\epsilon+ \gamma \overline{z}+\beta \overline{w})\\
	&=(z+wj)(\beta + \overline{\gamma} j) + (\alpha+\overline{\epsilon} j)\\
	&=ZA +K,
	\end{align*}
	as required.
\end{proof}

\begin{remark}
	Thus, right $C\ell_{1,1}$-differentiability is perhaps not a good analogue of holomorphic. Even though these are equivalent notions in the complex setting, in the split quaternionic setting there are more directions in which to take the limit and this requires much stronger conditions. For this reason we are justified in studying functions in the kernels of the operators, and not the $C\ell_{1,1}$-differentiable functions.
\end{remark}

\subsection{Regularity and John's Equation}
Given a $F: U \to C\ell_{1,1}$ whose components are at least $C^2$ and which satisfies at least one of the following:
$$\overline{\partial} F=0, \ F\overline{\partial}=0, \ {\partial}F=0, \text{ or } F{\partial}=0,$$
must have components which satisfy John's equation \cite{Libine1}:
$$\Delta_{2,2} u=0.$$
Such functions are said to be ultra-hyperbolic.

In fact, we can use ultra-hyperbolic functions to build regular functions.

\begin{theorem}
	Let $f: U\to \mathbb{R}$ be ultra-hyperbolic, then ${\partial}f$ is both left and right regular.
\end{theorem}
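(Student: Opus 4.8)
The plan is to exploit the fact that $\overline{\partial}$ and ${\partial}$ factor the ultra-hyperbolic operator. First I would record that, since $f$ is real-valued, the $C\ell_{1,1}$-valued function ${\partial}f$ has \emph{real} component functions, read directly off the definition of ${\partial}$: namely $f_0={\partial f/\partial x_0}$, $f_1=-{\partial f/\partial x_1}$, $f_2={\partial f/\partial x_2}$ and $f_3={\partial f/\partial x_3}$. The whole argument then amounts to feeding these four functions into the two systems of PDEs that characterize left and right regularity (the two Propositions above).

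The key computation is the operator identity $\overline{\partial}\,{\partial}=\Delta_{2,2}$. Expanding the product and using $i^2=-1$, $j^2=1$, $(ij)^2=1$ and $ij=-ji$, the four ``diagonal'' terms assemble into $\Delta_{2,2}$ while every term carrying a factor of $i$, $j$ or $ij$ cancels against its partner once equality of mixed partials is invoked; this is where the $C^2$ hypothesis enters. Granting this, left regularity is immediate: because $f$ is scalar, the units produced by ${\partial}$ are simply multiplied on the left by those of $\overline{\partial}$, so $\overline{\partial}({\partial}f)=(\overline{\partial}\,{\partial})f=\Delta_{2,2}f=0$.

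For right regularity the same clean factorization is \emph{not} available, and this is the main obstacle. When $\overline{\partial}$ is applied on the right, the units coming from ${\partial}$ sit on the left of each product while those coming from $\overline{\partial}$ sit on the right, so $({\partial}f)\overline{\partial}$ is a ``sandwiched'' sum $\sum_{k,m} u_k\,v_m\,({\partial^2 f/\partial x_k\,\partial x_m})$, with $u=(1,-i,j,ij)$ the units of ${\partial}$ and $v=(1,i,-j,-ij)$ those of $\overline{\partial}$, rather than an honest operator composition applied to $f$. I would resolve this by symmetrization: since the Hessian ${\partial^2 f/\partial x_k\,\partial x_m}$ is symmetric (again the $C^2$ hypothesis), only the symmetric part $\tfrac12(u_k v_m+u_m v_k)$ of the coefficients survives. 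A short check with the multiplication rules shows this symmetric part is exactly the signature-$(2,2)$ metric $\mathrm{diag}(1,1,-1,-1)$: the diagonal entries $u_k v_k$ reproduce $\Delta_{2,2}$, and every off-diagonal pair $u_k v_m+u_m v_k$ vanishes. Hence $({\partial}f)\overline{\partial}=\Delta_{2,2}f=0$ as well.

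Concretely, rather than argue abstractly I would simply substitute the four components above into each line of the left- and right-regularity systems. In both systems the first equation collapses to ${\partial^2 f/\partial x_0^2}+{\partial^2 f/\partial x_1^2}-{\partial^2 f/\partial x_2^2}-{\partial^2 f/\partial x_3^2}=\Delta_{2,2}f=0$, while each of the remaining three equations pairs up two mixed second derivatives of $f$ with opposite signs and therefore vanishes identically by the equality of mixed partials. This makes the verification entirely mechanical and sidesteps any bookkeeping about left versus right multiplication, confirming that ${\partial}f$ lies in the kernel of $\overline{\partial}$ on both sides.
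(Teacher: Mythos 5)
Your proof is correct and takes essentially the same route as the paper, whose entire proof is the one-line factorization identity $\overline{\partial}({\partial} f) = \Delta_{2,2}f = 0 = ({\partial} f)\overline{\partial}$. You are in fact more careful than the paper on the one subtle point: the paper asserts $({\partial} f)\overline{\partial} = \Delta_{2,2}f$ without comment, whereas your symmetrization of the Hessian (equivalently, your direct substitution of the components of ${\partial}f$ into the right-regularity system) is precisely the justification needed, since applying $\overline{\partial}$ on the right of ${\partial}f$ is not an honest operator composition and only works here because $f$ is scalar-valued.
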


\begin{proof}
	Write $F={\partial} f$. Then clearly
	$$\overline{\partial} F= \Delta_{2,2}f=0=({\partial} f)\overline{\partial}=F\overline{\partial}.$$
\end{proof}

It turns out that left and right differentiable functions also have components which are ultra-hyperbolic.

\begin{theorem}
	Let $F: U \to C\ell_{1,1} $, with components which are at least $C^2$, be left-differentiable or right-differentiable. Then the components of $F$ are ultra-hyperbolic. 
\end{theorem}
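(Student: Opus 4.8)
The plan is to obtain the result directly from the two characterization theorems just proved, rather than by attacking the first-order PDE systems head-on. First I would invoke those theorems: a right $C\ell_{1,1}$-differentiable $F$ must have the form $F(Z)=AZ+K$, and a left $C\ell_{1,1}$-differentiable $F$ must have the form $F(Z)=ZA+K$, with $A,K\in C\ell_{1,1}$. In either case the explicit real expansions of $AZ$ and $ZA$ recorded earlier exhibit each component $f_0,f_1,f_2,f_3$ as a real-affine function of $(x_0,x_1,x_2,x_3)$, i.e. a polynomial of degree at most one.

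The conclusion is then immediate: every second-order partial derivative of an affine function vanishes identically, so for each component
$$\frac{\partial^2 f_k}{\partial x_0^2}=\frac{\partial^2 f_k}{\partial x_1^2}=\frac{\partial^2 f_k}{\partial x_2^2}=\frac{\partial^2 f_k}{\partial x_3^2}=0,$$
whence $\Delta_{2,2}f_k=0$ trivially and the components are ultra-hyperbolic. The stated $C^2$ hypothesis is only needed so that $\Delta_{2,2}f_k$ is meaningful a priori; since the characterization theorems actually upgrade $F$ to $C^\infty$, no circularity arises.

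The point worth flagging — and my reason for routing through the preceding theorems instead of the PDE systems — is that the first-order relations do not by themselves deliver John's equation. Writing $\partial_k$ for $\partial/\partial x_k$, differentiating the right-differentiable relation $\partial_0 f_0=\partial_1 f_1$ and using $\partial_0 f_1=-\partial_1 f_0$ gives $\partial_0^2 f_0+\partial_1^2 f_0=0$ (planar harmonicity in $(x_0,x_1)$), while the third- and fourth-row relations combined with the first row give $\partial_2^2 f_0=\partial_3^2 f_0=\partial_0^2 f_0$. Assembling these yields only
$$\Delta_{2,2}f_0=\frac{\partial^2 f_0}{\partial x_0^2}+\frac{\partial^2 f_0}{\partial x_1^2}-\frac{\partial^2 f_0}{\partial x_2^2}-\frac{\partial^2 f_0}{\partial x_3^2}=-2\,\frac{\partial^2 f_0}{\partial x_0^2},$$
which is not visibly zero. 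The missing ingredient, $\partial_0^2 f_0=0$, is precisely the second-order information extracted in the characterization proofs, where the mixed second partials of $g$ and $h$ are shown to vanish. Thus the substantive content already lives in the preceding theorems; the main obstacle would only resurface if one insisted on a self-contained first-order argument, and the cleanest course is to quote the affine characterization and finish by the triviality that affine functions have vanishing second derivatives.
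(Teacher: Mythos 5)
Your proof is correct, but it takes a genuinely different route from the paper, and the reason you give for choosing that route is mistaken. The paper proves this theorem by exactly the kind of self-contained first-order argument you claim is impossible: from the right-differentiability system one reads off the four relations
$$\frac{\partial f_0}{\partial x_0}=\frac{\partial f_1}{\partial x_1},\quad
\frac{\partial f_0}{\partial x_1}=-\frac{\partial f_1}{\partial x_0},\quad
\frac{\partial f_0}{\partial x_2}=-\frac{\partial f_1}{\partial x_3},\quad
\frac{\partial f_0}{\partial x_3}=\frac{\partial f_1}{\partial x_2},$$
i.e.\ all four first partials of $f_0$ are expressed through the \emph{same} companion component $f_1$, and then
\begin{align*}
\Delta_{2,2}f_0
&=\frac{\partial}{\partial x_0}\left(\frac{\partial f_1}{\partial x_1}\right)
+\frac{\partial}{\partial x_1}\left(-\frac{\partial f_1}{\partial x_0}\right)
-\frac{\partial}{\partial x_2}\left(-\frac{\partial f_1}{\partial x_3}\right)
-\frac{\partial}{\partial x_3}\left(\frac{\partial f_1}{\partial x_2}\right)=0
\end{align*}
by equality of mixed partials --- which is precisely where the $C^2$ hypothesis enters; the same trick handles $f_1,f_2,f_3$ and the left-differentiable case. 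Your computation $\Delta_{2,2}f_0=-2\,\frac{\partial^2 f_0}{\partial x_0^2}$ is not wrong, but it only shows that one particular choice of substitutions fails to close up, not that every choice does; routing everything through $f_1$ alone closes up immediately (and the two identities together recover $\frac{\partial^2 f_0}{\partial x_0^2}=0$, consistent with affinity). As for your actual argument: quoting the affine classifications $F=AZ+K$ and $F=ZA+K$ is legitimate, since those theorems precede this one in the paper and do not depend on it, so there is no circularity, and your route is shorter and yields the stronger conclusion that all second derivatives vanish identically. What it costs is that it imports the full strength of the classification theorems, whose proofs are much heavier (holomorphy in two complex variables, vanishing of second-order partials, integration over convex sets), whereas the paper's two-line computation uses nothing but the first-order system and Clairaut's theorem, and would survive in any setting where such a classification is unavailable.
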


\begin{proof}
	Suppose $F(x_0,x_1,x_2,x_3)=f_0+f_1i +f_2j+f_3ij$ is right-differentiable. Then notice that
	\begin{align*}
	{\partial^2 f_0 \over \partial x^2_0} &+  {\partial^2 f_0 \over \partial x^2_1} -  {\partial^2 f_0 \over \partial x^2_2} -  {\partial^2 f_0\over \partial x^2_3}\\
	&={\partial \over \partial x_0}\left({\partial f_1 \over \partial x_1} \right) +{\partial \over \partial x_1}\left(-{\partial f_1 \over \partial x_0} \right) -{\partial \over \partial x_2}\left(-{\partial f_1 \over \partial x_3} \right)-{\partial \over \partial x_3}\left({\partial f_1 \over \partial x_2} \right)\\
	&=0.
	\end{align*}
	A similar argument works for the other $f_i$ and for the left-differentiable case.
\end{proof}



\section{A Theory of Left-Regular Functions}
With all of these notions of holomorphic functions, it becomes necessary to choose one and deem it the ``canonical'' one. Since the difference quotients do not yield an extensive class of functions, we believe use of an operator to be the be the best place to start. Given the association between $C\ell_{1,1}$ and $\mathbb{R}^{2,2}$, it seems $\overline{\partial}$ is the ideal operator for our purposes, since it is also the gradient in $\mathbb{R}^{2,2}$ (and since it is an analogue of $\partial_{\bar{z}}$, which is $\frac{1}{2}$ times the gradient of $\R^2$). Given the overwhelming convention of applying operators on the left of functions, we choose left-regular to be the canonical notion of holomorphic.

Indeed, this is the one chosen by Libine \cite{Libine1}. In his work, he shows that left-regular functions satisfy a Cauchy-like integral formula.

\begin{theorem}[Libine's Integral Formula]
	Let $U\subseteq C\ell_{1,1}$ be a bounded open (in the Euclidean topology) region with smooth boundary $\partial U$. Let $f: U \to C\ell_{1,1}$ be a function which extends to a real-differentiable function on an open neighborhood $V\subseteq C\ell_{1,1}$ of $\overline{U}$ such that $\overline{\partial} f=0$. Then for any $Z_0 \in C\ell_{1,1}$ such that the boundary of $U$ intersects the cone $C=\left\{Z\in C\ell_{1,1}: (Z-Z_0)\overline{(Z-Z_0)}=0 \right\}$ transversally, we have
	\begin{align*}
	\lim_{\epsilon \to 0}\frac{-1}{2\pi^2}\int_{\partial U}\frac{\overline{\left(Z-Z_0\right)}}{(Z-Z_0)\overline{(Z-Z_0)}+i\epsilon\left\|Z-Z_0\right\|^2}\cdot& dZ \cdot f(Z)\\
	&=	\begin{cases}
	f(Z_0) &\text{if $Z_0\in U$} \\
	0 &\text{else}
	\end{cases},
	\end{align*}
	where the three form $dZ$ is given by
	$$dZ=dx_1\wedge dx_2\wedge dx_3-(dx_0\wedge dx_2\wedge dx_3)i+(dx_0\wedge dx_1\wedge dx_3)j-(dx_0\wedge dx_1\wedge dx_2)ij.$$
\end{theorem}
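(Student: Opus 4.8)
The plan is to prove this as a Clifford-analytic Cauchy--Pompeiu formula, resting on two ingredients: a Stokes-type (Leibniz) identity for the oriented boundary form $dZ$, and the recognition of the integral kernel as a regularized fundamental solution of $\overline{\partial}$. First I would establish an identity of the shape
$$ d\big( g\, dZ\, f\big) = \big[(g\,\overline{\partial})\,f + g\,(\overline{\partial} f)\big]\, dx_0\wedge dx_1\wedge dx_2\wedge dx_3 $$
for $C^1$ Clifford-valued $g,f$, where $g\,\overline{\partial}$ denotes $\overline{\partial}$ acting on the right. Here some care is required to match the orientation and signs built into $dZ$ with those of $\overline{\partial}$: because $\overline{\partial}$ is the gradient of the \emph{indefinite} metric on $\mathbb{R}^{2,2}$, the pairing between the coordinate boundary form $dZ$ and the operator involves raising indices in signature $(2,2)$, which is reflected in the particular signs attached to $i,j,ij$ in $dZ$. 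Granting this identity, the generalized Stokes theorem gives $\int_{\partial U} g\, dZ\, f = \int_U[(g\,\overline{\partial})f + g(\overline{\partial} f)]\,dV$, and since $f$ is left-regular the term $g(\overline{\partial} f)$ drops, leaving only a contribution in which the derivative falls on the kernel $g$.

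Next I would analyze the regularized kernel $E_\epsilon(Z) = \tfrac{-1}{2\pi^2}\,\overline{(Z-Z_0)}\big[(Z-Z_0)\overline{(Z-Z_0)}+i\epsilon\|Z-Z_0\|^2\big]^{-1}$. The decisive structural point is that, while the unregularized kernel is singular along the \emph{entire} null cone $C$ (where $(Z-Z_0)\overline{(Z-Z_0)}=0$), for $\epsilon>0$ the imaginary part $\epsilon\|Z-Z_0\|^2$ of the denominator vanishes only at $Z=Z_0$, so $E_\epsilon$ is smooth on $V\setminus\{Z_0\}$. Away from $Z_0$ one checks by direct differentiation that the unregularized kernel is right-annihilated by $\overline{\partial}$ (it is built from $\overline{(Z-Z_0)}$ divided by the norm form, the analogue of the Cauchy--Fueter kernel $\overline{q}/|q|^4$), and I would show that the regularization preserves this in the limit in the distributional sense: $E_\epsilon\,\overline{\partial}\to\delta_{Z_0}$ as $\epsilon\to 0^+$, with the constant $\tfrac{-1}{2\pi^2}$ fixed by computing the flux of the kernel through a small hypersurface about $Z_0$ (the $2\pi^2$ being the volume of the unit three-sphere). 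Substituting $g=E_\epsilon$ into the Stokes identity and passing to the limit then yields $f(Z_0)$ when $Z_0\in U$ and $0$ when $Z_0\notin\overline{U}$.

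\textbf{The main obstacle is the limit $\epsilon\to 0$.} In the definite (Cauchy--Fueter) setting the kernel is singular only at the single point $Z_0$, and one simply excises a small ball; here the singular set is the whole three-dimensional cone $C$, and the $i\epsilon\|Z-Z_0\|^2$ term is a Sokhotski--Plemelj / Feynman-type prescription designed to smear this singularity consistently. The delicate work is to prove that the contributions of the integrand along $C\cap U$ and along $C\cap\partial U$ either cancel or vanish as $\epsilon\to 0$, so that only the point mass at $Z_0$ survives. This is exactly where the transversality of $\partial U$ with the cone is needed: it guarantees that $C\cap\partial U$ is a smooth surface of the expected dimension with no tangential accumulation, which is what makes the singular boundary integral converge and lets one interchange the limit with integration. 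Establishing the distributional identity $\lim_{\epsilon\to0^+}E_\epsilon\,\overline{\partial}=\delta_{Z_0}$ rigorously, together with this uniform boundary control, is the technical heart of the argument; the Stokes identity and the flux normalization are then comparatively routine.
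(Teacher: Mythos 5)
You should know at the outset that the paper contains no proof of this theorem to compare against: it is imported verbatim from Libine's work \cite{Libine1} and used only as motivation for adopting left-regularity as the canonical notion. So your proposal can only be judged on its own merits. On that basis, the architecture you describe --- a Leibniz/Stokes identity $d\big(g\,dZ\,f\big)=\big[(g\overline{\partial})f+g(\overline{\partial}f)\big]\,dV$, the observation that for $\epsilon>0$ the denominator has imaginary part $\epsilon\|Z-Z_0\|^2$ and so vanishes only at $Z=Z_0$, and a limiting argument as $\epsilon\to 0^+$ in which transversality of $\partial U$ with the null cone controls the singular set --- is the right general shape for a formula of this kind. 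But your writeup defers exactly the part that constitutes the mathematical content: you never establish the distributional statement $E_\epsilon\overline{\partial}\to\delta_{Z_0}$, nor the vanishing of contributions along the cone. Note in particular that the cone is unbounded, so it cuts through $U$ even when $Z_0\notin\overline{U}$; the ``else'' case is therefore just as delicate as the main case, in sharp contrast to the Euclidean Cauchy--Fueter setting where one simply invokes closedness of the form. As it stands, what you have is a strategy, not a proof.

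There is also a concrete failure you would have hit at your ``flux normalization'' step. The kernel as written in the statement, $\overline{(Z-Z_0)}$ divided by $(Z-Z_0)\overline{(Z-Z_0)}+i\epsilon\|Z-Z_0\|^2$, is homogeneous of degree $-1$ in $Z-Z_0$ (numerator of degree $1$, denominator of degree $2$, and both terms of the denominator scale identically, so the regularization does not spoil homogeneity). Since the form $dZ$ has constant coefficients, its pullback under dilation by $r$ scales as $r^3$, so the flux of this kernel through the sphere of radius $r$ about $Z_0$ is $O(r^2)$ uniformly in $\epsilon$. Taking $U$ to be a small ball centered at $Z_0$ (whose boundary meets the cone transversally) and $f\equiv 1$ (which is left-regular), the left-hand side is $O(r^2)$, which cannot equal $f(Z_0)=1$; hence the statement as transcribed is literally false. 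A point evaluation requires a kernel of homogeneity $-3$: in Libine's original formula the numerator is $(Z-Z_0)^{-1}=\overline{(Z-Z_0)}\big/\big((Z-Z_0)\overline{(Z-Z_0)}\big)$, i.e.\ there is an extra factor of $(Z-Z_0)\overline{(Z-Z_0)}$ in the denominator, which restores degree $-3$ and makes $-1/2\pi^2$ (with $2\pi^2$ the volume of $S^3$) the correct constant. Your sketch silently substitutes this corrected, Fueter-type kernel (you model it on $\overline{q}/|q|^4$), which is the mathematically right move, but a complete argument must notice and repair the discrepancy --- otherwise the normalization computation you outline collapses.
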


Given this interesting property has been proven, it is some what surprising that a more detailed description of left-regular functions has not been given in the literature. So we conclude by showing that some left regular functions have a simple description.

\subsection{A Class of Left Regular Functions}

To date, the author has not been able to find a description for left regular functions in a manner similar to the split-complex case \cite{EmanNold, Libine2}. It may be the case that no such description exists in general. However, it is possible to give a large class of left-regular functions a simple description.

\begin{theorem}\label{oneclass}
	Let $F: U\subseteq \mathbb{R}^{2,2} \to C\ell_{1,1}$ have the form
	\begin{align*}
	F(x_0,x_1,x_2,x_3)&=\left( g_1(x_0+x_2,x_1+x_3)+g_2(x_0-x_2, x_1-x_3)\right) \\
	&+\left( g_3(x_0-x_2, x_1-x_3)+g_4(x_0+x_2,x_1+x_3)\right)i\\
	&+\left( g_1(x_0+x_2,x_1+x_3)-g_2(x_0-x_2, x_1-x_3)\right)j \\
	&+\left( g_3(x_0-x_2, x_1-x_3)-g_4(x_0+x_2,x_1+x_3)\right)ij,
	\end{align*}
	where $g_i\in C^1(U)$. Then $\overline{\partial} F=0$. 
\end{theorem}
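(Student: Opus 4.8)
The plan is to reduce $\overline{\partial}F = 0$ to the four-equation system of the Proposition above and then verify each equation by direct substitution. Reading off the components of $F$, we have $f_0 = g_1 + g_2$, $f_1 = g_3 + g_4$, $f_2 = g_1 - g_2$, and $f_3 = g_3 - g_4$, where $g_1, g_4$ are evaluated at $(x_0+x_2, x_1+x_3)$ and $g_2, g_3$ at $(x_0-x_2, x_1-x_3)$. Since the system is first order, only the chain rule is needed to compute the relevant partials, so no hypothesis beyond $g_i \in C^1(U)$ is used.

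The single observation that makes every term cancel is a sign rule from the chain rule. Writing $g_{k,1}$ and $g_{k,2}$ for the derivatives of $g_k$ in its first and second arguments, the ``plus'' functions $g_1$ and $g_4$ (depending on $(x_0+x_2, x_1+x_3)$) satisfy $\partial_{x_0}g_k = \partial_{x_2}g_k = g_{k,1}$ and $\partial_{x_1}g_k = \partial_{x_3}g_k = g_{k,2}$, while the ``minus'' functions $g_2$ and $g_3$ (depending on $(x_0-x_2, x_1-x_3)$) satisfy $\partial_{x_0}g_k = g_{k,1} = -\partial_{x_2}g_k$ and $\partial_{x_1}g_k = g_{k,2} = -\partial_{x_3}g_k$. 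In short, trading an $x_0$- or $x_1$-derivative for the matching $x_2$- or $x_3$-derivative costs a factor $+1$ on the plus functions and $-1$ on the minus functions.

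I would then substitute these into the four equations. For example, in the first equation $\partial_{x_0}f_0 - \partial_{x_1}f_1 - \partial_{x_2}f_2 - \partial_{x_3}f_3$, the $g_1$ contribution is $+g_{1,1}$ from $\partial_{x_0}f_0$ and $-g_{1,1}$ from $-\partial_{x_2}f_2$ (since $f_2$ carries $+g_1$ and $g_1$ is a plus function), so they cancel; the $g_2, g_3, g_4$ terms cancel by the same mechanism, the $\pm$ pairing of the $g_k$ across $f_0,\dots,f_3$ having been arranged precisely so that each plus function meets a matching plus function and each minus function a matching minus function. The remaining three equations cancel identically.

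The only real obstacle is bookkeeping: there are eight symbols $g_{k,1}, g_{k,2}$, each carrying a sign, to track across four equations. There is no analytic content beyond the chain rule—in particular, equality of mixed partials is never invoked—so the statement is ultimately an algebraic identity among first partials. A convenient way to organize the computation, if desired, is to note that $f_0+f_2 = 2g_1$ and $f_1-f_3 = 2g_4$ depend only on $(x_0+x_2, x_1+x_3)$ while $f_0-f_2 = 2g_2$ and $f_1+f_3 = 2g_3$ depend only on $(x_0-x_2, x_1-x_3)$; these are exactly the null-coordinate combinations along which the factored ultra-hyperbolic structure decouples, which explains why the ansatz yields left-regular functions.
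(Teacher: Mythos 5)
Your proof is correct, and it takes a genuinely different route from the paper's. You verify the four-PDE system for left regularity directly: with $f_0=g_1+g_2$, $f_1=g_3+g_4$, $f_2=g_1-g_2$, $f_3=g_3-g_4$ and your chain-rule sign rule ($\partial_{x_2},\partial_{x_3}$ agree with $\partial_{x_0},\partial_{x_1}$ on the plus functions $g_1,g_4$ and differ by a sign on the minus functions $g_2,g_3$), every equation cancels term by term; I checked all four and they do, so the gap left by your phrase ``the remaining three equations cancel identically'' is one of exposition, not of substance. The paper explicitly waves at this computation (``we can easily check that such an $F$ satisfies the necessary system of PDEs'') but declines to carry it out, choosing instead to \emph{derive} the ansatz: it rewrites $\overline{\partial}=\partial_1+i\partial_2$ in the null coordinates $u_0=x_0+x_2$, $v_0=x_0-x_2$, $u_1=x_1+x_3$, $v_1=x_1-x_3$ using the idempotents $j_\pm=\frac{1\pm j}{2}$, decomposes $F=\left(F_0j_++F_1j_-\right)+i\left(F_2j_++F_3j_-\right)$, and observes that $\overline{\partial}F=0$ holds whenever $F_0,F_3$ depend only on $(u_0,u_1)$ and $F_1,F_2$ only on $(v_0,v_1)$. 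Your closing remark---that $f_0+f_2=2g_1$ and $f_1-f_3=2g_4$ depend only on the plus variables while $f_0-f_2=2g_2$ and $f_1+f_3=2g_3$ depend only on the minus variables---is exactly that decomposition ($F_0=2g_1$, $F_1=2g_2$, $F_2=2g_3$, $F_3=2g_4$), so you rediscover the paper's structural insight, but as a postscript rather than as the engine of the proof. The trade-off: your argument is elementary and self-contained, using nothing beyond the earlier Proposition, the chain rule, and the $C^1$ hypothesis; the paper's derivation explains where this class of solutions comes from and makes visible that the decoupling condition is merely sufficient, which is precisely what Example \ref{counterexample} then exploits to show the converse fails.
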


\begin{proof}
	We can easily check that such an $F$ satisfies the necessary system of PDEs. However, it is far more enlightening to see how one can arrive at such a solution.
	
	Write $F=f_0+f_1 i+f_2 j+f_3 ij$.  Using an argument from \cite{EmanNold}, we have that
	\begin{align*}
	\overline{\partial} &=\left( {\partial \over \partial x_0}\ - \ j{\partial \over \partial x_2}\right) \ + \ i\left( {\partial \over \partial x_1}\  - \ j{\partial \over \partial x_3}\right)\\
	&=2\left({\partial \over \partial v_0}j_+ +{\partial \over \partial u_0}j_- \right)\ + \ 2i\left({\partial \over \partial v_1}j_+ +{\partial \over \partial u_1}j_-  \right) \\
	&:= \partial_1\ + \ i\partial_2,
	\end{align*}
	
	where $u_0=x_0+x_2$, $v_0=x_0-x_2$, $u_1=x_1+x_3$, $u_1=x_1-x_3$, $j_+=\displaystyle \frac{1+j}{2}$, and $j_-=\displaystyle \frac{1-j}{2}$. 	
	The key fact is that $j_+$ and $j_-$ are idempotents and annihilate each other. Also, notice that $ij_+=j_-i$ and $ij_-=j_+i$.
	
	Similarly, we may write 
	$$F=\left(F_0 j_+ +F_1 j_- \right) \ + \ i\left(F_2 j_+ +F_3 j_- \right).$$
	
	Now, \emph{one way} in which $\overline{\partial} F=0$ is if 
	\begin{align*}
	\partial_1\left(F_0 j_+ +F_1 j_- \right)&=\partial_2\left(F_0 j_+ +F_1 j_- \right)\\
	&=\partial_1\left(i\left(F_2 j_+ +F_3 j_- \right) \right)=\partial_2\left(i\left(F_2 j_+ +F_3 j_- \right) \right)=0.
	\end{align*}
	
	Using the above facts about $j_+$ and $j_-$, we see that the conditions implies that
	$$ \begin{cases} \displaystyle
	{\partial F_0 \over \partial v_0}= {\partial F_0 \over \partial v_1}=0\\ \\
	\displaystyle {\partial F_1 \over \partial u_0}= {\partial F_1 \over \partial u_1}=0 \\ \\
	\displaystyle
	{\partial F_2 \over \partial u_0}= {\partial F_2 \over \partial u_1}=0 \\ \\
	\displaystyle {\partial F_3 \over \partial v_0}= {\partial F_3 \over \partial v_1}=0
	\end{cases}$$
	
	This, of course, means that
	$$F_0=F_0(u_0,u_1), \ F_1=F_1(v_0,v_1),\ F_2=F_2(v_0,v_1), \ F_3=F_3(u_0,u_1).$$
	
	Translating back to the original coordinates, we see $F$ has the desired form.
	
\end{proof}

The converse is not true, in general. Here is a simple counter-example.

\begin{example}\label{counterexample}
	Consider the $C\ell_{1,1}$-valued function
	$$f(x_0,x_1,x_2,x_3)=x_1x_2x_3-x_0x_2x_3i+x_0x_1x_3j+x_0x_1x_2ij.$$
	It is easy to check that $f$ satisfies the necessary system of PDEs so that $\overline{\partial} f=0$. However, notice that if we write $f$ as in the above proof, then 
	$$f=\frac{\left(u^{2}_{1}-v^{2}_{1} \right)}{4}\left(u_0j_+-v_0j_- \right) +i\frac{\left(u^{2}_{0}v^{2}_{0} \right)}{4}\left(-u_0j_++v_0j_- \right). $$
	Now,
	$$\partial_2\left[\frac{\left(u^{2}_{1}-v^{2}_{1} \right)}{4}\left(u_0j_+-v_0j_- \right) \right]= -2v_1u_0j_+-2u_1v_0\not\equiv 0.$$
	Thus, $f$ is not of the form as prescribed in Theorem \ref{oneclass}.
\end{example}

\subsection{Generating Left Regular Functions}
In a manner similar to the $C\ell_{0,n}$ case, we can also take a $C\ell_{1,1}$-valued function whose components are real analytic and generate a left regular function valued in $C\ell_{1,1}$. In fact, there are two ways to do this. The first borrows heavily from a result found in Brackx, Delanghe, and Sommen's book \cite{BrackxDelangheSommen1}.

\begin{theorem}
	Let $g(x_2, x_3)$ be a $C\ell_{1,1}$-valued function on $U\subseteq \R^2$ with real-analytic components. Then the function
	$$f(Z)=\sum_{k=0}^{\infty}{\partial}\left[ \left( \frac{x^{2k+1}_{0} + x^{2k+1}_{1} }{\left(2k+1 \right)! }\right)\Delta^k g(x_2,x_3) \right], $$
	where $\Delta$ is the Laplace operator in the $x_2x_3$-plane, is left-regular in an open neighborhood of $\left\lbrace (0,0) \right\rbrace \times U$ in $\R^{2,2}$ and $ f(0,0,x_2,x_3) =g(x_2,x_3)-ig(x_2x_3)$.
\end{theorem}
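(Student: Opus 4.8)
The plan is to set
$$G(Z):=\sum_{k=0}^{\infty}\left(\frac{x_0^{2k+1}+x_1^{2k+1}}{(2k+1)!}\right)\Delta^k g(x_2,x_3),$$
so that the asserted function is simply $f=\partial G$. Abbreviating $P_k:=\dfrac{x_0^{2k+1}+x_1^{2k+1}}{(2k+1)!}$, left-regularity reduces to one clean identity: since $\overline{\partial}$ and $\partial$ factor $\Delta_{2,2}$, once term-by-term differentiation is justified we have
$$\overline{\partial}f=\overline{\partial}\,\partial G=\Delta_{2,2}G=\sum_{k=0}^{\infty}\Delta_{2,2}\!\left(P_k\,\Delta^k g\right),$$
where $\Delta_{2,2}$ acts componentwise because the operator product $\overline{\partial}\partial$ collapses to the scalar ultra-hyperbolic Laplacian (the mixed second-order terms cancel by the anticommutation relations among $i,j,ij$). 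Thus everything hinges on showing this last series vanishes.

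For the main computation I would exploit the separation of variables built into each summand: $P_k$ depends only on $(x_0,x_1)$ and $\Delta^k g$ only on $(x_2,x_3)$. Splitting $\Delta_{2,2}=\bigl(\partial_{x_0}^2+\partial_{x_1}^2\bigr)-\Delta$ and using $\partial_{x_0}^2 P_k=\dfrac{x_0^{2k-1}}{(2k-1)!}$ (and likewise in $x_1$) for $k\ge1$, with the $k=0$ term annihilated by those second derivatives, I obtain
$$\Delta_{2,2}\!\left(P_k\,\Delta^k g\right)=P_{k-1}\,\Delta^k g-P_k\,\Delta^{k+1}g\ \ (k\ge1),\qquad \Delta_{2,2}\!\left(P_0\, g\right)=-P_0\,\Delta g.$$
Summing over $k$ the series telescopes: the $-P_0\,\Delta g$ coming from $k=0$ cancels the leading $P_0\,\Delta g$ produced by reindexing $\sum_{k\ge1}P_{k-1}\Delta^k g$, and all higher terms pair off, leaving $\overline{\partial}f=0$.

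The genuinely delicate step, and where I expect the real work, is the convergence of $G$ and the licence to differentiate termwise. Since $g$ has real-analytic components on $U$, Cauchy estimates give, on each compact $K\subset U$, constants $C,M>0$ with $\bigl|\Delta^k g\bigr|\le C\,M^{2k}(2k)!$ on $K$. Combined with the $(2k+1)!$ in the denominator and the factors $x_0^{2k+1},x_1^{2k+1}$, the general term is dominated by $C\,|x_0|\,(M|x_0|)^{2k}/(2k+1)$ (and the analogous $x_1$ bound), which forces absolute and locally uniform convergence of $G$ and of every series obtained by applying $\partial$, $\overline{\partial}$, and $\Delta_{2,2}$, on a neighborhood of the form $\{\,|x_0|<\rho,\ |x_1|<\rho\,\}\times K$ of $\{(0,0)\}\times U$ with $\rho<1/M$. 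This simultaneously makes $f$ real-differentiable there, gives meaning to $f=\partial G$, and validates all the term-by-term manipulations above.

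Finally, the initial condition follows by evaluating $\partial G=\bigl(\partial_{x_0}-i\partial_{x_1}+j\partial_{x_2}+ij\partial_{x_3}\bigr)G$ at $x_0=x_1=0$. Because $P_k(0,0)=0$ for every $k$, the $j\partial_{x_2}$ and $ij\partial_{x_3}$ contributions vanish; meanwhile $\partial_{x_0}P_k=\dfrac{x_0^{2k}}{(2k)!}$ and $\partial_{x_1}P_k=\dfrac{x_1^{2k}}{(2k)!}$ each collapse at $x_0=x_1=0$ to the single $k=0$ term, so that $\partial_{x_0}G|_0=g$ and $\partial_{x_1}G|_0=g$. Hence
$$f(0,0,x_2,x_3)=g-ig=g(x_2,x_3)-ig(x_2,x_3),$$
as claimed.
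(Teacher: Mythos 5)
Your proposal is correct and takes essentially the same approach as the paper: both rest on the factorization $\overline{\partial}\,\partial=\Delta_{2,2}$, the separation of variables in each summand (the polynomial factor depends only on $(x_0,x_1)$, while $\Delta^k g$ depends only on $(x_2,x_3)$), the resulting telescoping of the series, and Taylor-type analyticity estimates on compact sets to get locally uniform convergence. You go slightly beyond the paper by verifying the initial condition $f(0,0,x_2,x_3)=g-ig$ explicitly, which the paper's proof omits; conversely, the paper records the first-derivative bounds $\sup_K\left|\frac{\partial}{\partial x_\ell}\Delta^k g\right|\le(2k+1)!\,c_K\lambda_K^k$ needed to dominate the differentiated series, a point you assert but do not spell out.
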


\begin{proof}
	We proceed by a similar proof found in \cite{BrackxDelangheSommen1}.
	
	Let $g(x_2,x_3)= g_0(x_2,x_3)+g_1(x_2,x_3)i+g_2(x_2,x_3)j+g_3(x_2,x_3)ij$. Since $g_{\ell}$ is analytic, then an application of Taylor's theorem gives that on every compact set $K\subset U$ there are constants $c_K$ and $\lambda_K$, depending on $K$, such that
	\begin{align*}
	&\sup_{(x_2,x_3)\in K}\left|\Delta^k g(x_2,x_3) \right|\leq (2k)!c_K\lambda^{k}_{K} \hspace{.25cm}\text{ and }\\ &\sup_{(x_2,x_3)\in K}\left|\frac{\partial}{\partial x_{\ell}}\Delta^k g(x_2,x_3) \right|\leq (2k+1)!c_K\lambda^{k}_{K},
	\end{align*}
	where $\left| \text{ }\cdot\text{ }\right|$ denotes the euclidean norm in $\R^4$. 
	
	Thus,
	\begin{align*}
	\sup_{(x_2,x_3)\in K}&\left|{\partial}\left[ \left( \frac{x^{2k+1}_{0} + x^{2k+1}_{1} }{\left(2k+1 \right)! }\right)\Delta^k g(x_2,x_3) \right] \right|\\
	&=	\sup_{(x_2,x_3)\in K}\left|\left( \frac{x^{2k}_{0}}{(2k)!}+i\frac{x^{2k}_{1}}{(2k)!}\right)\Delta^kg\right. \\
	&\left. \hspace{ 2.5cm}- \frac{x^{2k+1}_{0} + x^{2k+1}_{1} }{\left(2k+1 \right)! }\left(j \frac{\partial}{\partial x_2}\Delta^kg+ij\frac{\partial}{\partial x_3}\Delta^kg\right)  \right|\\
	&\leq \sup_{(x_0,x_1)\in K}\left[\left| \frac{x^{2k}_{0}}{(2k)!}\right|\left|\Delta^kg \right| +\left| \frac{x^{2k}_{1}}{(2k)!}\right|\left|\Delta^kg \right| \right. \\
	&\hspace{ 2.5cm}\left.+\left|\frac{x^{2k+1}_{0} + x^{2k+1}_{1} }{\left(2k+1 \right)! }  \right|\left( \left|\frac{\partial}{\partial x_2}\Delta^kg \right|+  \left|\frac{\partial}{\partial x_3}\Delta^kg \right| \right)  \right]\\
	&\leq c_{K} \left[ \left(1+2\left| x_0\right|  \right)x^{2k}_{0}\lambda^k_{K} + \left(1+2\left| x_1\right|  \right)x^{2k}_{1}\lambda^{k}_{K}\right] ,
	\end{align*}
	so that $f$ converges uniformly on
	$$\bigcup_{K\subseteq U}\left[\left(-\frac{1}{\sqrt{\lambda_K}},\frac{1}{\sqrt{\lambda_K}} \right)\times\left(-\frac{1}{\sqrt{\lambda_K}},\frac{1}{\sqrt{\lambda_K}} \right)\times \mathring{K}  \right]. $$
	
	Now,
	\begin{align*}
	\overline{\partial} f&=\sum_{k=0}^{\infty}\Delta_{2,2}\left[ \left( \frac{x^{2k+1}_{0} + x^{2k+1}_{1} }{\left(2k+1 \right)! }\right)\Delta^k g(x_2,x_3) \right]\\
	&=\sum_{k=0}^{\infty} \Delta_{2,2}\left(\frac{x^{2k+1}_{0} + x^{2k+1}_{1}}{\left(2k+1 \right)! }  \right) \Delta^k g(x_2,x_3) \\
	&\qquad +\ \sum_{k=0}^{\infty} \left(\frac{x^{2k+1}_{0} + x^{2k+1}_{1}}{\left(2k+1 \right)! } \right) \Delta_{2,2} \left(\Delta^k g(x_2,x_3)\right) \\
	&=\sum_{k=1}^{\infty}\left( \frac{x^{2k-1}_{0} + x^{2k-1}_{1}}{\left(2k-1 \right)! }\right)\Delta^k g(x_2,x_3)\\
	&\qquad-\sum_{k=0}^{\infty} \left(\frac{x^{2k+1}_{0} \ + \ x^{2k+1}_{1}}{\left(2k+1 \right)! } \right)  \Delta^{k+1} g(x_2,x_3)\\
	&=0,
	\end{align*}
	as needed.
\end{proof}

\begin{example}
	Let $g(x_2,x_3)=x_2x_3$. Then $\Delta g=0$ and the formula above gives
	\begin{align*}
	f(Z)&={\partial}\left[ (x_0+x_1)(x_2x_3)\right] \\
	&=x_2x_3-x_2x_3i+(x_0x_3+x_1x_3)j+(x_0x_2+x_1x_2).
	\end{align*}
\end{example}

A less trivial example demonstrates that the more complicated $g$ is the more complicated $f$ is.
\begin{example}
	Let $g(x_2,x_3)=x^{4}_{2}+x_2x^{3}_{3}$. Thus, $\Delta g=12x^{2}_{2}+6x_2x_3$ and $\Delta^2g=24$.

	Then from the formula, we get
	\begin{align*}
	f(Z)=&\left[ x^{4}_{0}+3x^{2}_{0}(2x^{2}_{2}+x_2x_3)+x^{4}_{2}+x_2x^{3}_{3}\right] \\
	 &\qquad +\left[x^{4}_{1}+3x^{2}_{1}(2x^{2}_{2}+x_2x_3)+x^{4}_{2}+x_2x^{3}_{3} \right] i\\
	&\qquad+\left[(x^{2}_{0}x^{2}_{1})+x^{2}_{2}(x^{2}_{0}+x^{2}_{1})+\frac{x^{4}_{2}}{3} \right] j \\
	&\qquad+\left[(x^{2}_{0}x^{2}_{1})+x^{2}_{3}(x^{2}_{0}+x^{2}_{1})+\frac{x^{4}_{3}}{3} \right]ij.
	\end{align*}
\end{example}

We can define a true extension of an analytic function which is left regular and closely resembles the Cauchy-Kowalewski extension found in \cite{Delanghe1,JRyan1}. Again, we are again grateful to Brackx et. al for their proof in the $C\ell_{0,n}$ case, which again gives the convergence of the series.

\begin{theorem}[Cauchy-Kowalewski Extension in $C\ell_{1,1}$]
	Let $g(x_1,x_2,x_3)$ be a $C\ell_{1,1}$-valued function whose components are real-analytic functions on $U\subseteq \R^{3}$. 
	Then the function
	$$f(x_0,x_1,x_2,x_3)=\sum_{k=0}^{\infty}\frac{(-x_0)^k}{k!}D^kg(x_1,x_2,x_3),$$
	where $D=\overline{\partial} -\frac{\partial}{\partial x_0}$, is left-regular in an open neighborhood of $\left\lbrace 0 \right\rbrace \times U$, and $f(0,x_1,x_2,x_3)=g(x_1,x_2,x_3)$.
\end{theorem}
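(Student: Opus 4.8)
The plan is to split the argument into a short formal part, which is just a telescoping cancellation, and an analytic part, which establishes convergence and legitimizes working termwise. The key structural observation is the splitting $\overline{\partial}=\frac{\partial}{\partial x_0}+D$, where by definition $D=i\frac{\partial}{\partial x_1}-j\frac{\partial}{\partial x_2}-ij\frac{\partial}{\partial x_3}$ contains no $x_0$-derivative. The boundary identity is then immediate: setting $x_0=0$ annihilates every term of the series except the $k=0$ term, leaving $f(0,x_1,x_2,x_3)=g(x_1,x_2,x_3)$.

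For the formal computation I would differentiate the series termwise. Since $\frac{(-x_0)^k}{k!}$ is a real scalar depending only on $x_0$, it commutes with the $C\ell_{1,1}$-coefficients of $D$ and is killed by the $x_1,x_2,x_3$-derivatives in $D$, so $D\big(\frac{(-x_0)^k}{k!}D^kg\big)=\frac{(-x_0)^k}{k!}D^{k+1}g$ and hence $Df=\sum_{k\ge0}\frac{(-x_0)^k}{k!}D^{k+1}g$. On the other hand $\frac{\partial}{\partial x_0}\frac{(-x_0)^k}{k!}=-\frac{(-x_0)^{k-1}}{(k-1)!}$, so after reindexing $\frac{\partial f}{\partial x_0}=-\sum_{k\ge0}\frac{(-x_0)^k}{k!}D^{k+1}g$. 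Adding the two gives $\overline{\partial}f=\frac{\partial f}{\partial x_0}+Df=0$; the two series cancel term by term.

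The substance of the proof is to make this rigorous, which, as in the preceding theorem, rests on the real-analyticity of the components of $g$. I would invoke the standard characterization: on each compact $K\subset U$ there exist constants $c_K,\lambda_K>0$ with $\sup_K|\partial^\alpha g|\le c_K\lambda_K^{|\alpha|}\alpha!$ for every multi-index $\alpha$ in the variables $x_1,x_2,x_3$. Because $D$ is a constant-coefficient first-order operator whose coefficients $i,-j,-ij$ act by Euclidean isometries of $C\ell_{1,1}\cong\R^4$, expanding $D^k$ produces $3^k$ terms, each a unit-norm product of basis elements times a $k$-th order partial derivative of $g$; summing the associated multinomial coefficients yields $\sup_K|D^kg|\le c_K\,\tilde\lambda_K^{\,k}\,k!$, where the polynomial factor $\binom{k+2}{2}$ counting the mixed partials has been absorbed by enlarging $\lambda_K$ to $\tilde\lambda_K$. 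Consequently the $k$-th term of the series is bounded by $\frac{|x_0|^k}{k!}\,c_K\tilde\lambda_K^{\,k}k!=c_K(|x_0|\tilde\lambda_K)^k$, a geometric series, so $f$ converges uniformly on $(-1/\tilde\lambda_K,\,1/\tilde\lambda_K)\times\mathring{K}$; taking the union over $K$ gives an open neighborhood of $\{0\}\times U$.

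Finally I would apply the same estimate to $D^{k+1}g$ to see that the differentiated series for $\frac{\partial f}{\partial x_0}$ and for $Df$ converge uniformly on such a neighborhood (possibly after shrinking the $x_0$-interval slightly), which justifies the termwise application of $\overline{\partial}$ used above; combined with the formal cancellation this yields $\overline{\partial}f=0$. The main obstacle is precisely this analytic bookkeeping---controlling $|D^kg|$ uniformly in $k$---rather than the algebra of regularity; but since the non-commutativity only ever contributes unit-norm factors and the combinatorial growth is at most $3^k$ times a polynomial in $k$, it is harmless and is swept into the constant $\tilde\lambda_K$, so the argument parallels the $C\ell_{0,n}$ proof in \cite{BrackxDelangheSommen1} essentially verbatim.
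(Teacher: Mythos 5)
Your proposal is correct and follows essentially the same route as the paper's own proof: the same splitting $\overline{\partial}=\frac{\partial}{\partial x_0}+D$, the same telescoping cancellation of the two differentiated series, and the same key estimate $\sup_K\left|D^k g\right|\leq 3^k c_K (k!)\lambda_K^k$ derived from real-analyticity, giving geometric-series convergence on a neighborhood of $\left\lbrace 0\right\rbrace\times U$. The only differences are cosmetic: the paper proves the bound on $D^k g$ in a separate lemma by splitting into even $k$ (where $D^k$ is a scalar operator, handled by the trinomial theorem) and odd $k$, whereas you expand $D^k$ directly into $3^k$ words with unit-norm Clifford coefficients, and you are somewhat more careful than the paper in explicitly justifying the termwise application of $\overline{\partial}$ via uniform convergence of the differentiated series.
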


The following lemma will be useful in demonstrating the convergence of $f$ in an open neighborhood of $U$.

\begin{lemma}
	Let $g(x_1,x_2,x_3)$ be a $C\ell_{1,1}$-valued function whose components are real-analytic functions on $U\subseteq \R^{3}$. Then on a compact set $K$, there are constants $c_K$ and $\lambda_K$ such that
	$$\left|D^k g(x_1,x_2,x_3)\right|\leq 3^k c_K(k!)\lambda^{k}_{K}. $$
\end{lemma}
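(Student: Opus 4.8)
The plan is to reduce the bound to the classical Cauchy estimates for real-analytic functions, exploiting that $D$, after the subtraction of $\partial/\partial x_0$, is a \emph{constant}-coefficient first-order operator whose coefficients are, up to sign, basis units of $C\ell_{1,1}$. First I would record the analyticity input. Since each of the four real components $g_0,g_1,g_2,g_3$ is real-analytic on $U$, the standard Cauchy estimates give, for each compact $K\subset U$, constants $c_K$ and $\lambda_K$ such that every partial derivative obeys
$$\sup_{(x_1,x_2,x_3)\in K}\left|\partial^{\alpha}g(x_1,x_2,x_3)\right|\leq c_K\,|\alpha|!\,\lambda_K^{|\alpha|}$$
for every multi-index $\alpha=(\alpha_1,\alpha_2,\alpha_3)$, where $\partial^{\alpha}=\partial_1^{\alpha_1}\partial_2^{\alpha_2}\partial_3^{\alpha_3}$, $\partial_\ell=\partial/\partial x_\ell$, and $|\,\cdot\,|$ is the Euclidean norm on $\R^4\cong C\ell_{1,1}$. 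Here one takes $\lambda_K$ to be the largest of the four radii produced component-by-component and absorbs the passage from $\alpha!$ to $|\alpha|!$, as well as the passage from the components to the Euclidean norm, into $c_K$.

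Next I would expand the iterate $D^k$. Because $\overline{\partial}=\partial/\partial x_0+i\,\partial/\partial x_1-j\,\partial/\partial x_2-ij\,\partial/\partial x_3$, the operator $D=\overline{\partial}-\partial/\partial x_0$ is
$$D=e_1\frac{\partial}{\partial x_1}+e_2\frac{\partial}{\partial x_2}+e_3\frac{\partial}{\partial x_3},\qquad e_1=i,\ e_2=-j,\ e_3=-ij,$$
a first-order operator with constant Clifford coefficients acting on the left. Since the $e_\ell$ do not depend on $x$, they pass through the derivatives, and a straightforward induction on $k$ gives
$$D^k g=\sum_{\ell_1,\dots,\ell_k\in\{1,2,3\}} e_{\ell_1}\cdots e_{\ell_k}\,\partial_{\ell_1}\cdots\partial_{\ell_k}g,$$
a sum of exactly $3^k$ terms, each the product of $k$ basis units multiplied on the left of a $k$-th order partial derivative $\partial^{\alpha}g$ with $|\alpha|=k$. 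Noncommutativity affects only the order of the factors $e_{\ell_1}\cdots e_{\ell_k}$, which is harmless for the estimate.

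The key structural fact, which I would isolate as the heart of the argument, is that each coefficient product $e_{\ell_1}\cdots e_{\ell_k}$ lies in the finite set $\{\pm1,\pm i,\pm j,\pm ij\}$, and that left multiplication by any such element is a signed permutation of the standard basis $\{1,i,j,ij\}$ of $\R^4$, hence a Euclidean isometry. Consequently each summand satisfies $|e_{\ell_1}\cdots e_{\ell_k}\,\partial^{\alpha}g|=|\partial^{\alpha}g|\leq c_K\,k!\,\lambda_K^{k}$, and summing the $3^k$ terms yields the claimed bound $|D^k g|\leq 3^k c_K (k!)\lambda_K^{k}$.

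The only genuinely delicate points are the bookkeeping needed to justify the $3^k$-term expansion of $D^k$ in a noncommutative setting, which is clean precisely because the coefficients are constant and act on a single side, and the verification of the isometry claim for left multiplication by the units of $C\ell_{1,1}$. I expect the latter to be the main thing to get right, since the split-quaternion Euclidean norm is \emph{not} multiplicative in general; one must therefore invoke the special structure of multiplication by a single basis unit rather than any general submultiplicativity, and it is this isometry that makes the constant exactly $3^k$, the number of words, rather than an inflated power.
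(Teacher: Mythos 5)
Your proof is correct, and it takes a genuinely different route from the paper's. The paper argues by parity of $k$: its key observation is that $D^2$ is a \emph{scalar} operator (the cross terms cancel because the coefficients $i$, $-j$, $-ij$ pairwise anticommute, giving $D^2=-\partial_1^2+\partial_2^2+\partial_3^2$), so for even $k$ the iterate $D^k$ is expanded via the trinomial theorem with scalar coefficients, while for odd $k$ one writes $D^k=D^{k-1}\cdot D$ and controls the single remaining Clifford factor with the triangle inequality. You instead expand $D^k$ directly into the $3^k$ words $e_{\ell_1}\cdots e_{\ell_k}\,\partial_{\ell_1}\cdots\partial_{\ell_k}$ (valid, as you note, precisely because the coefficients are constant and act on one side only), and then invoke the structural facts that the set $\{\pm1,\pm i,\pm j,\pm ij\}$ is closed under multiplication and that left multiplication by any of its elements is a signed permutation of the orthonormal basis $\{1,i,j,ij\}$, hence a Euclidean isometry. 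Your insistence on the isometry rather than on submultiplicativity is exactly right: the Euclidean norm on $C\ell_{1,1}$ is not submultiplicative (e.g.\ $\left|(1+j)^2\right|=2\sqrt{2}>\left|1+j\right|^2=2$), so a generic estimate $|AB|\le C|A||B|$ would degrade the constant to $(3C)^k$; note that the paper implicitly needs the same isometry fact, but only for a single basis unit, in the triangle-inequality step of its odd-$k$ case. What each approach buys: yours avoids the parity split and any special identity for $D^2$, and it generalizes verbatim to any constant-coefficient first-order operator whose coefficient words remain norm-preserving; the paper's exploits the specific algebraic identity for $D^2$ to keep the combinatorics scalar for all but one factor. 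Both arrive at the same bound $3^k c_K(k!)\lambda_K^k$ from the same Taylor-estimate input on the partial derivatives of $g$.
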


\begin{proof}[Proof of the Lemma]
  Taylor's theorem, again,  gives that on a compact set $K$, there are constants $c_K$ and $\lambda_K$ such that
	$$\left|\frac{\partial^{k}}{\partial x^{k_1}_{1}\partial x^{k_2}_{2}\partial x^{k_3}_{3}} g(x_1,x_2,x_3)\right|\leq c_K(k!)\lambda^{k}_{K}. $$
	It is worth mentioning that we first saw the above inequality in \cite{BrackxDelangheSommen1}.
	
	Notice that when $k$ is even, $D^k$ is a scalar operator. So suppose $k$ is even. Then by the trinomial theorem,
	$$D^k=\sum_{k_1+k_2+k_3=k}\frac{k!}{(k_1)!(k_2)!(k_3)!}\frac{\partial^{k}}{\partial x^{k_1}_{1}\partial x^{k_2}_{2}\partial x^{k_3}_{3}}.$$
	
	Then,
	\begin{align*}
	\left|D^k g(x_1,x_2,x_3)\right| &\leq \sum_{k_1+k_2+k_3=k} \frac{k!}{(k_1)!(k_2)!(k_3)!}\left| \frac{\partial^{k}}{\partial x^{k_1}_{1}\partial x^{k_2}_{2}\partial x^{k_3}_{3}}g(x_1,x_2,x_3)\right|\\
	&\leq  c_K(k!)\lambda_K\sum_{k_1+k_2+k_3=k} \frac{k!}{(k_1)!(k_2)!(k_3)!}\\
	&=3^kc_K(k!)\lambda^{k}_{K}.
	\end{align*}
	
	Now suppose $k$ is odd. Then we have
	\begin{multline*}
	D^k=\sum_{\substack{k_1+k_2+k_3= \\ k-1} }\frac{(k-1)!}{(k_1)!(k_2)!(k_3)!}\left( i\frac{\partial^{k}}{\partial x^{k_1+1}_{1}\partial x^{k_2}_{2}\partial x^{k_3}_{3}}\right. \\\left. -j\frac{\partial^{k}}{\partial x^{k_1}_{1}\partial x^{k_2+1}_{2}\partial x^{k_3}_{3}} 
	- ij\frac{\partial^{k}}{\partial x^{k_1}_{1}\partial x^{k_2}_{2}\partial x^{k_3+1}_{3}}\right).
	\end{multline*}
	
	This means that
	\begin{align*}
	\left| D^kg(x_1,x_2,x_3)\right|&\leq \sum_{\substack{k_1+k_2+k_3= \\ k-1}}\frac{(k-1)!}{(k_1)!(k_2)!(k_3)!}\left| i\frac{\partial^{k}g(x_1,x_2,x_3)}{\partial x^{k_1+1}_{1}\partial x^{k_2}_{2}\partial x^{k_3}_{3}}\right. \\  &\qquad  \left.-j\frac{\partial^{k}g(x_1,x_2,x_3)}{\partial x^{k_1}_{1}\partial x^{k_2+1}_{2}\partial x^{k_3}_{3}} 
	- ij\frac{\partial^{k}g(x_1,x_2,x_3)}{\partial x^{k_1}_{1}\partial x^{k_2}_{2}\partial x^{k_3+1}_{3}}\right|\\
	&\leq\sum_{\substack{k_1+k_2+k_3= \\ k-1}}\frac{(k-1)!}{(k_1)!(k_2)!(k_3)!}\left( \left|\frac{\partial^{k}g(x_1,x_2,x_3)}{\partial x^{k_1+1}_{1}\partial x^{k_2}_{2}\partial x^{k_3}_{3}}\right|\right.   \\  &\qquad \left.  +\left| \frac{\partial^{k}g(x_1,x_2,x_3)}{\partial x^{k_1}_{1}\partial x^{k_2+1}_{2}\partial x^{k_3}_{3}}\right|  
	+\left| \frac{\partial^{k}g(x_1,x_2,x_3)}{\partial x^{k_1}_{1}\partial x^{k_2}_{2}\partial x^{k_3+1}_{3}}\right|\right) \\
	&=\sum_{\substack{k_1+k_2+k_3= \\ k-1}}\frac{(k-1)!}{(k_1)!(k_2)!(k_3)!}\left( 3c_K(k!)\lambda^{k}_{K}\right)\\
	&= 3^kc_K(k!)\lambda^{k}_{K},
	\end{align*}
	as required.		
\end{proof}

\begin{proof}[Proof of the Theorem]
	The above lemma gives us that on a compact set $K\subset U$ there are constants $c_K$ and $\lambda_K$, depending on $K$, such that
	$$\left|\frac{(-x_0)^k}{k!}D^kg(x_1,x_2,x_3)\right|\leq c_K \left(3\lambda_{K}\right)^{k} x^{k}_{0}.$$
	
	Thus, $f$ converges uniformly on
	$$\bigcup_{K\subseteq U}\left[\left(-\frac{1}{3\lambda_K},\frac{1}{3\lambda_K} \right)\times \mathring{K}  \right]. $$

	The essential calculation is 
	\begin{align*}
	\overline{\partial} f &= \sum_{k=0}^{\infty}\overline{\partial}\left( \frac{(-x_0)^k}{k!}D^kg(x_1,x_2,x_3)\right)\\
	&=\sum_{k=0}^{\infty}\overline{\partial}\left( \frac{(-x_0)^k}{k!}\right) D^kg(x_1,x_2,x_3)+\sum_{k=0}^{\infty}\frac{(-x_0)^k}{k!}\overline{\partial} \left( D^kg(x_1,x_2,x_3)\right) \\
	&=-\sum_{k=1}^{\infty} \frac{(-x_0)^{k-1}}{(k-1)!} D^kg(x_1,x_2,x_3)+\sum_{k=0}^{\infty} \frac{(-x_0)^k}{k!} D^{k+1}g(x_1,x_2,x_3)\\
	&=0,
	\end{align*}
	as required.
\end{proof}

\begin{remark}
	We may think of the above extension as a solution to the boundary value problem:
	$$	\begin{cases}
	&\overline{\partial} f(x_0,x_1,x_2,x_3)=0\\
	& f(0,x_1,x_2,x_3)=g(x_1,x_2,x_3)
	\end{cases}.$$
\end{remark}

\begin{example}
	Consider the homogeneous polynomial of degree 2 $$g(x_1,x_2,x_3)=x^{2}_{1}+x^{2}_{2}+x^{2}_{3}+x_1x_2+x_1x_3+x_2x_3.$$
	Now,
	\begin{align*}
	& Dg(x_1,x_2,x_3)=\left(2x_1+x_2+x_3\right)i -\left(2x_2+x_1+x_3 \right)j -\left(2x_3+x_2+x_1 \right) ij\\
	& D^2g(x_1,x_2,x_3)=-6\\
	& D^3(x_1,x_2,x_3)=0.
	\end{align*}
	Thus,
	\begin{align*}
	f(Z)=&\left(-3x^{2}_{0}+x^{2}_{1}+x^{2}_{2}+x^{2}_{3}+x_1x_2+x_1x_3+x_2x_3 \right)\\
	&-x_0\left(2x_1+x_2+x_3 \right)i-x_0\left(2x_2+x_1+x_3 \right)j-x_0\left(2x_3+x_1+x_2 \right)ij.
	\end{align*}
	is the left- regular function obtained by the above theorem.
\end{example}

\begin{remark}
	In both of these formulas, a polynomial $g$ will be transformed to a Clifford valued function where every component is a polynomial. This is the case because polynomials have partial derivatives of $0$ after a certain order. That is, $D^k g$ and $\Delta^k g$ will be uniformly $0$ for all $k>M$ for some finite $M$.
\end{remark}

	\bibliographystyle{plain}
	\bibliography{EmanuelloNolderBibliography2}

\begin{thebibliography}{10}

\bibitem{Ahl}
Lars~V. Ahlfors.
\newblock {\em Complex Analysis : An Introduction to the Theory of Analytic
  Functions of One Complex Variable}.
\newblock McGraw-Hill, New York, 1979.

\bibitem{BrackxDelangheSommen1}
F.~Brackx, R.~Delanghe, and F.~Sommen.
\newblock {\em Clifford Analysis}.
\newblock Pitman Andvanced Publishing Program, Boston, 1982.

\bibitem{Delanghe1}
Richard Delanghe.
\newblock {C}lifford {A}nalysis: {H}istory and {P}erspective.
\newblock {\em Computational Methods and Function Theory}, 1, 2001.

\bibitem{EmanNold}
John~A. Emanuello and Craig~A. Nolder.
\newblock Projective {C}ompactification of $\mathbb{R}^{1,1}$ and its
  {M}\"obius {G}eometry.
\newblock {\em Complex Analysis and Operator Theory}, 9(2):329--354, 2015.

\bibitem{JonesSingerman}
Gareth~A. Jones and David Singerman.
\newblock {\em Complex Functions: An Algebraic and Geometric Viewpoint}.
\newblock Cambridge University Press, 1987.

\bibitem{Kisil97c}
Vladimir~V. Kisil.
\newblock Analysis in {$\mathbf{R}\sp {1,1}$} or the principal function theory.
\newblock {\em Complex Variables Theory Appl.}, 40(2):93--118, 1999.
\newblock arXiv{funct-an/9712003}.

\bibitem{Libine2}
Matvei Libine.
\newblock {H}yperbolic {C}auchy {I}ntegral {F}ormula for the {S}plit {C}omplex
  {N}umbers.
\newblock arXiv:0712.0375, December 2007.

\bibitem{Libine1}
Matvei Libine.
\newblock {An Invitation to Split Quaternionic Analysis}.
\newblock In Irene Sabadini and Frank Sommen, editors, {\em Hypercomplex
  Analysis and Applications}, Trends in Mathematics, pages 161--180. Springer
  Basel, 2011.

\bibitem{MasEtAl}
N.~Masrouri, Y.~Yayli, M.H. Faroughi, and M.~Mirshafizadeh.
\newblock Comments on differentiable over function of split quaternions.
\newblock {\em Revista Notas de Matem\'atica}, 7(2):128--134, 2011.

\bibitem{Obol}
Elena Obolashvili.
\newblock {\em Partial Differential Equations in Clifford Analysis}.
\newblock Addison Wesley Longman Limited, 1998.

\bibitem{Oneill}
Barrett O'Neill.
\newblock {\em Semi-Riemmanian Geometry}, volume 103 of {\em Pure and Applied
  Matematics}.
\newblock Academic Press, Inc., 1983.

\bibitem{JRyan1}
John Ryan.
\newblock {I}ntroductory {C}lifford {A}nalysis.
\newblock In Rafal Ablamowicz and Garret Sobczyk, editors, {\em Lectures on
  Clifford(Geometric) Algebras and Applications}. Birkhauser Boston, 2004.

\bibitem{Sudbery}
A.~Sudbery.
\newblock Quaternionic {A}nalysis.
\newblock {\em Math. Proc. Cambridge Philos. Soc.}, 85(2):199--224, 1979.

\end{thebibliography}
\end{document}